\title{Edge growth in graph squares}
\author{Michael Goff}
\newtheorem{theorem}{Theorem}[section]
\newtheorem{lemma}[theorem]{Lemma}
\newtheorem{definition}[theorem]{Definition}
\newtheorem{conjecture}[theorem]{Conjecture}
\begin{document}

\begin{abstract}
We resolve a conjecture of Hegarty regarding the number of edges in the square of a regular graph.  If $G$ is a connected $d$-regular graph with $n$ vertices, the graph square of $G$ is not complete, and $G$ is not a member of two narrow families of graphs, then the square of $G$ has at least $(2-o_d(1))n$ more edges than $G$.
\end{abstract}

\date{October 3, 2011}

\maketitle

\section{Introduction}
In this paper, we consider the following problem.  Let $G$ be a $d$-regular graph, and let $G^2$ be the graph with the same vertex set as $G$ and an edge $(u,v)$ whenever $u$ and $v$ are within distance $2$ in $G$.  Then find a lower bound on the number of edges of $G^2$, or $e(G^2)$.  With the assumptions that $G$ is connected and that $G^2$ is not a complete graph, this question was posed by Hegarty \cite[Conjecture 1.8]{CD}.

In his work, Hegarty discussed general graph powers.  Let $G^k$ be the graph with an edge $(u,v)$ whenever $u$ and $v$ are within distance $k$ in $G$.  Several authors have considered lower bounds on $e(G^k)$.  Hegarty found that $e(G^3) \geq (1+c)dn/2$ if $G$ is a $d$-regular graph with diameter at least three, with $c = 0.087$.  Pokrovskiy \cite{graphpowers} found a value of $c = 1/6$, and DeVos and Thomass\'{e} improved the value of $c$ to $3/4$ and provided examples to demonstrate that no higher value of $c$ is possible.  The latter authors also weakened the $d$-regular condition to a minimum degree of $d$.

DeVos, McDonald, and Scheide \cite{avgdeg} considered higher powers of $G$.  They found that if $G$ has a minimum degree of $d \geq2$ and $G$ has at least $\frac{8}{3} d$ vertices, then $G^4$ has an average degree of at least $\frac{7}{3} d$.  Examples demonstrate that neither the $8/3$ nor the $7/3$ constants may be improved.  They also found that when the diameter is at least $3k+3$, then the average degree of $G^{3k+2}$ is at least $(2k+1)(d+1) - k(k+1)(d+1)^2/n - 1$, and examples show that this cannot be improved.

Inspiration for Hegarty's work comes from the Cauchy-Davenport theorem, which states that if $A$ is a subset of $\mathbb{Z}_p$ for a prime $p$, and $hA$ denotes the set of sums of collections of $h$ elements of $A$, then $|kA| \geq \min(p, k|A| - (k-1))$.  Knesser \cite{knesser} generalized the Cauchy-Davenport theorem to an abelian group $H$.  Now suppose that $A = -A$ and that $A$ contains the identity element.  The connection to graph theory comes through the Cayley graph.  The Cayley graph $G(H,A)$ has vertex set $H$ and an edge $g_1g_2$ whenever $g_1 - g_2 \in A$.  Then $(G(H,A))^k = G(kH, A)$, and the growth in $kH$ is equivalent to the growth in the vertex degrees of $(G(H,A))^k$.

For large values of $d$ and $k > 2$, $e(G^k)$ exceeds $e(G)$ by at least a constant factor, so long as $G^k$ is not a complete graph and  $G$ is $d$-regular.  This is not true for $k=2$, as examples demonstrate.  In that case, Hegarty conjectures the following \cite[Conjecture 1.8]{CD}.

\begin{conjecture}
\label{square_conj}
Let $G$ be a connected $d$-regular graph with $n$ vertices such that $G^2$ is not a complete graph.  Then $e(G^2) - e(G) \geq (2-o_d(1))n$.
\end{conjecture}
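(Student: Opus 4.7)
The target is equivalent to $S := \sum_v d_2(v) \geq (4-o_d(1))n$, where $d_2(v)$ counts vertices at distance exactly $2$ from $v$. My plan controls $S$ via a double-counting identity that reduces the question to a structural analysis in the regime where the triangle count $T$ of $G$ is close to its maximum.

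The local step is the identity $|E(N(v), V \setminus N[v])| = 2f(v)$, where $N[v] = N(v) \cup \{v\}$ and $f(v) := \binom{d}{2} - e(N(v))$ is the number of non-edges inside $N(v)$. To see it, sum $\deg_G(u) = d$ over $u \in N(v)$ to obtain $d^2 = d + 2e(N(v)) + |E(N(v), V \setminus N[v])|$. Each of these $2f(v)$ edges lands in the distance-$2$ neighborhood of $v$, and no such distance-$2$ vertex receives more than $d$ of them, so $d_2(v) \geq 2f(v)/d$.

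Summing, and noting that each triangle contributes one edge to $e(N(v))$ for each of its three vertices, yields $\sum_v f(v) = n\binom{d}{2} - 3T$, hence $S \geq n(d-1) - 6T/d$. This exceeds $4n - o(n)$ whenever $T \leq nd(d-5)/6 + o(nd)$, handling an easy regime with room to spare. In the remaining regime $T$ is within $O(nd)$ of the maximum $nd(d-1)/6$, which forces most edges $\{u,v\}$ to satisfy $|N(u)\cap N(v)| = d-1$, equivalently $N[u] = N[v]$. I would introduce the equivalence relation $u \equiv v \iff N[u] = N[v]$, whose classes are cliques of size at most $d+1$; in the extreme case of no defective edges, $G$ would be a disjoint union of copies of $K_{d+1}$, so connectivity together with $G^2 \neq K_n$ forces at least one defective edge bridging two distinct classes.

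Each defective edge $uv$ joining classes $U$ and $V'$ grants $v$ access in two steps to essentially all of $U$ and vice versa, while each vertex of $U \cup V' \setminus \{u,v\}$ gains at least one new distance-$2$ partner across the bridge. The plan is to sum these bridge contributions, correcting for vertices incident to more than one defective edge, and to show that except when $G$ matches one of the two excluded families the total boost is at least $(4-o_d(1))n$. The main obstacle is precisely this structural classification: one must enumerate the minimal bridge patterns between near-cliques (a single matching, a shared subclique, several near-cliques linked in a small graph, and so on), verify that every pattern outside the two narrow families yields a strict boost beyond the target, and check the bound for the narrow families themselves by direct computation. For instance, two $K_{d+1}$'s joined by deleting an edge in each copy and adding a matching across give $S = 8d - 4 = (2 - O(1/d))n$ on $n = 2(d+1)$ vertices, a typical boundary case where the bound is tight up to $o_d(1)$.
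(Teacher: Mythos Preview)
The statement you are attempting to prove is Conjecture~\ref{square_conj}, and the paper explicitly records that it is \emph{false} as stated. The snake graph on $k\geq 2$ segments has $n=k(d+1)+2$ and $\sum_v d_2(v)=(4k-2)(d-1)+8$, so $S/n\to 4-2/k$ as $d\to\infty$; for $k=2$ this limit is $3$, not $4$. The peanut graph gives $S/n\to 7/2$. Hence no argument can yield $S\geq(4-o_d(1))n$ under the stated hypotheses, and your sample computation at the end---two copies of $K_{d+1}$ bridged by a $2$-matching, with $S/n=4-6/(d+1)$---is \emph{not} a counterexample at all; it satisfies the bound. You have misidentified the extremal configurations.

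There is also a quantitative gap in the dichotomy itself. Your easy regime ends at $T\leq nd(d-5)/6+o(nd)$, so the hard regime only guarantees $T\geq T_{\max}-\tfrac{2}{3}nd$. The deficit identity $\sum_{uv\in E}\bigl[(d-1)-|N(u)\cap N(v)|\bigr]=3(T_{\max}-T)$ then bounds the total defect by $2nd$, but $|E|=nd/2$, so this says nothing about the number of edges with $N[u]\neq N[v]$: every edge could be defective. Indeed, in a snake graph almost every edge is defective (within the $B$-region only the single edge between the two vertices of $\deg_2=1$ has $N[u]=N[v]$), so your equivalence classes are mostly singletons and the ``near-$K_{d+1}$ plus bridges'' picture never materialises. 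The paper's route to Theorem~\ref{main_theorem} is entirely different: it partitions $V(G)$ into \emph{superregions} built around the set $\{v:\deg_2(v)<4\}$, attaches to each superregion $R$ a collection $S_R$ of distance-$2$ ordered pairs with $|S_R|\geq 4|R\cap\mathcal V|$, and then carries out a long case analysis (Sections~\ref{super_sect}--\ref{proof_sect}) to show the $S_R$ are pairwise disjoint. It is precisely this case analysis that isolates the snake and peanut graphs as the only obstructions; your proposal defers the analogous classification without a mechanism to perform it.
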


This conjecture is not true as stated, and we shall see some counterexamples below, but the counterexamples are confined to narrow families of graphs known as snake graphs and peanut graphs.  Conjecture \ref{square_conj} is true if we claim that $e(G^2) - e(G) \geq (3/2-o_d(1))n$.  However, our main theorem is as follows.

\begin{theorem}
\label{main_theorem}
Let $G$ be a connected $d$-regular graph with $n$ vertices such that $G^2$ is not a complete graph and $d > 6$.  Also suppose that $G$ is not a snake graph or a peanut graph.  Then $$e(G^2) - e(G) > 2n\left( 1 -\frac{2}{d+1} - \frac{3}{d-3} \right).$$
\end{theorem}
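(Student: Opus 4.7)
The plan is to translate $e(G^2) - e(G) = \frac{1}{2}\sum_v |N_2(v)|$, where $N_2(v)$ is the set of vertices at graph distance exactly $2$ from $v$; the theorem then reduces to showing
\[
\sum_v |N_2(v)| \;>\; 4n\left(1 - \frac{2}{d+1} - \frac{3}{d-3}\right).
\]
My main tool would be the walk-counting identity
\[
\sum_{w \in N_2(v)} c(v,w) \;=\; d(d-1) - 2\,e(G[N(v)]),
\]
where $c(v,w)$ is the number of common neighbors of $v$ and $w$ in $G$; this follows by enumerating walks of length $2$ starting at $v$ in two ways. Combined with the trivial bound $c(v,w) \le d$, the identity shows that a vertex $v$ with few distance-$2$ neighbors must have $N[v]$ close to a copy of $K_{d+1}$.

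I would partition the vertices into \emph{spread} vertices, for which $e(G[N(v)])$ is noticeably smaller than $\binom{d}{2}$, and \emph{clique-type} vertices, for which $N[v]$ is a near-$K_{d+1}$. The identity immediately gives many distance-$2$ neighbors for spread vertices, so the hard case is when almost every vertex is clique-type. In that regime I would show that the individual near-cliques $N[v]$ agglomerate into a small family of near-$K_{d+1}$'s $C_1, \ldots, C_t$ whose union is essentially $V(G)$, joined to one another by only a few cross edges. Each missing edge inside some $C_i$ contributes a distance-$2$ pair, and each cross edge between two lobes creates on the order of $d$ distance-$2$ pairs, so summing over $i$ yields a global count close to $2n$. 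The configurations where the count is tight are precisely the peanut graph (two glued near-cliques) and the snake graph (a chain of glued near-cliques).

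The main obstacle is the structural rigidity step and the associated case analysis: showing that a connected $d$-regular graph in which almost every closed neighborhood is a near-$K_{d+1}$ must globally decompose into a short linear arrangement of lobes of the kind above, and that any configuration other than snake or peanut yields strictly more distance-$2$ pairs than the stated target. The hypothesis $d > 6$ provides the quantitative slack needed in these estimates, and the non-completeness of $G^2$ (equivalently, $G$ has diameter at least $3$) forces the decomposition to contain at least two lobes, and hence at least one joining site that contributes distance-$2$ pairs.
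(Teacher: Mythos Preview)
Your outline captures the right intuition --- vertices with small $|N_2(v)|$ force near-clique neighborhoods, and these agglomerate into lobes --- but it is only a sketch, and the step you yourself flag as the ``main obstacle'' is precisely where the entire content of the proof lies. The paper does not use your walk-counting identity; its replacement is Lemma~\ref{HighDegLemma}, which says that every vertex in $N_2'(v)$ of a low-degree $v$ has $\deg_2 \geq d-2$. From there the paper builds a much more rigid partition than ``spread vs.\ clique-type'': low-degree vertices are grouped into seven explicit region types (A--G), these are assembled into \emph{superregions} (single vertices, regions, tails, multitails, A-tails, B-tails), and for each superregion $R$ an explicit set $S_R$ of ordered distance-$2$ pairs is constructed with $|S_R| \geq 4|R \cap \mathcal{V}|$, where $\mathcal{V} = V(G) \setminus (\mathcal{U} \cup \mathcal{W} \cup \mathcal{N})$ for three small exceptional sets. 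The precise error terms in the bound are not incidental: $3/(d-3)$ comes from bounding $|\mathcal{U}|$ (the high-$\deg_2$ separator vertices) and $2/(d+1)$ from bounding $|\mathcal{W} \cup \mathcal{N}|$ superregion by superregion. The bulk of the paper (Section~\ref{proof_sect}) is then devoted to showing the $S_R$ are pairwise disjoint, via an extended case analysis on the type of $R$ and of the neighboring superregion; it is exactly in this case analysis that the snake and peanut graphs emerge as the only obstructions.

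Your soft count --- ``each cross edge creates on the order of $d$ distance-$2$ pairs, each missing edge inside a lobe contributes one'' --- is not sharp enough to separate the general case from the exceptions, nor does it explain the specific constant $4n(1 - 2/(d+1) - 3/(d-3))$. For instance, a snake with $k$ lobes already gives $\sum \deg_2(v) \approx (4 - 2/k)n$, so any argument that does not track the fine structure of how lobes are glued will fail to distinguish snake from non-snake. The paper achieves this separation only through the disjointness of the carefully chosen $S_R$'s, which in turn requires the full region/superregion taxonomy. So your proposal is philosophically aligned with the paper, but as written it is not a proof: the deferred case analysis \emph{is} the theorem.
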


Our approach is as follows.  We define basic terms in Section \ref{def_sect}.  We rephase the problem by counting ordered pairs of vertices $(u,v)$ such that $u$ and $v$ are distance $2$.  In Section \ref{region_sect}, we divide $G$ into what we call regions, and into superregions in Section \ref{super_sect}, and for each superregion $R$, we associate a collection of pairs $S_R$ such that $|S_R| > 4|R| \left( 1 -\frac{2}{d+1} - \frac{3}{d-3} \right)$.  A particularly important type of superregion is the class of tails, which we discuss in Section \ref{tail_sect}.  It is necessary to show that the superregions are a partition of the vertices of $G$, which we do in Section \ref{part_sect}.  In Section \ref{exception_sect}, we discuss the snake graph and peanut graph in detail.  We complete the proof in Section \ref{proof_sect} by showing that the collections of pairs associated with distinct superregions are also disjoint.

\section{Definitions}
\label{def_sect}
This section contains basic definitions that are used for the rest of the paper.

Let $G$ be a graph without multiple edges or loops.  $V(G)$ denotes the vertex set of $G$, and $e(G)$ the edge set of $G$.  If $X$ is a set of vertices of $G$, then $G[X]$ is the induced subgraph on $X$, or the maximal subgraph of $G$ with vertex set $X$.

The \textit{distance} between two vertices $u$ and $v$, denoted by $d(u,v)$, is the number of edges in a shortest path between $u$ and $v$.  Thus $d(u,u) = 0$, $d(u,v) = 1$ if there is an edge $uv$, and so on.  For each $i \geq 1 $ and vertex $v$, let $N_i(v)$ be the set of vertices that are distance $i$ from $v$.  We also say that $N(v) := N_1(v)$, and $\deg_i(v) := |N_i(v)|$.  Also, $N'_2(v)$ is the set of vertices $u \in N_2(v)$ such that $u \in N(w)$ for some $w \in N_3(v)$.  A $d$-regular graph is a graph such that every vertex $v$ satisfies $\deg(v) := \deg_1(v) = d$.

Let the graph power $G^k$ be the graph with $V(G^k) = V(G)$ and an edge $uv$ whenever $d(u,v) \leq k$ in $G$.

A \textit{low degree vertex} $v$ is a vertex $v$ satisfying $\deg_2(v) \leq 3$.  Note that if $v$ is a low degree vertex and $N'_2(v) = \emptyset$, then $G$ contains at most $d+4$ vertices and thus $G^2$ is complete when $d > 6$.
\begin{lemma}
\label{HighDegLemma}
Let $v \in V(G)$ and let $u \in N_2'(v)$.  Then $\deg_2(u) \geq d-\deg_2(v) + 1$.  In particular, if $v$ is low degree, then $\deg_2(u) \geq d-2$.
\end{lemma}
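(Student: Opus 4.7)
The plan is to exhibit an explicit subset of $N_2(u)$ of size at least $d - \deg_2(v) + 1$. Since $u \in N_2(v)$, pick a common neighbor $a \in N(u) \cap N(v)$, and since $u \in N_2'(v)$, pick $w \in N(u) \cap N_3(v)$. Define
\[
T := (N(a) \cup N(w)) \setminus (\{u\} \cup N(u)).
\]
Because $a$ and $w$ are both neighbors of $u$, every vertex of $N(a) \cup N(w)$ lies at distance at most $2$ from $u$; removing $\{u\} \cup N(u)$ forces the leftover vertices to be at distance exactly $2$ from $u$, so $T \subseteq N_2(u)$ and it suffices to bound $|T|$ from below.

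The argument turns on two short triangle-inequality facts about $a$ and $w$. First, any $y \in N(a) \cap N(w)$ satisfies $d(v,y) \leq d(v,a) + 1 = 2$ and $d(v,y) \geq d(v,w) - 1 = 2$, so $N(a) \cap N(w) \subseteq N_2(v)$ and therefore
\[
|N(a) \cup N(w)| = 2d - |N(a) \cap N(w)| \geq 2d - \deg_2(v).
\]
Second, $d(a, w) \geq d(v, w) - d(v, a) = 2$, so $a$ and $w$ are not adjacent; combined with the absence of loops this means neither $a$ nor $w$ lies in $N(a) \cup N(w)$, even though both lie in $N(u)$. Since $u$ itself lies in both $N(a)$ and $N(w)$, we obtain
\[
\bigl|(\{u\} \cup N(u)) \cap (N(a) \cup N(w))\bigr| \leq 1 + (d - 2) = d - 1.
\]

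Subtracting the two estimates gives $|T| \geq (2d - \deg_2(v)) - (d - 1) = d - \deg_2(v) + 1$, which is the first claim; the ``in particular'' clause then follows from $\deg_2(v) \leq 3$. I expect the main obstacle to be recognizing that the non-adjacency of $a$ and $w$ is exactly what saves the final $+1$: without excluding $a$ and $w$ from the overlap, the bound would weaken by $2$, and the stated estimate would be off. Isolating this simple geometric consequence of $d(v,a) = 1$ and $d(v,w) = 3$ is the crux of the argument.
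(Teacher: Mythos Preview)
Your proof is correct and follows essentially the same approach as the paper: pick two neighbors of $u$, one close to $v$ (your $a$, the paper's $x$) and one at distance $3$ from $v$ (your $w$, the paper's $x'$), show their neighborhoods together contain roughly $2d$ vertices with small overlap, and subtract off $\{u\}\cup N(u)$. The paper phrases the overlap control via the component decomposition of $G - N_2'(v)$ rather than your direct triangle-inequality bound $N(a)\cap N(w)\subseteq N_2(v)$, but the arithmetic and underlying idea are the same.
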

\begin{proof}
$G-N_2'(v)$ is disconnected, with one component of $G-N_2'(v)$ being $G_v := \{v\} \cup N(v) \cup (N_2(v) - N_2'(v))$ and another component $G'$ containing a vertex $x' \in N(u)$ that is distance $3$ from $v$.  Since $|N_2'(v)| \leq \deg_2(v)$, $x'$ has degree at least $d - \deg_2(v)$ in $G'$, and let $N'(x')$ be the sets of neighbors of $x'$ in $G'$.  Also, choose $x \in G_v \cap N(u)$.  Since $N(x) \subset \{v\} \cup N(v) \cup N_2(v)$, the sets $N(x) \cup \{x\}$ and $N'(x') \cup \{x'\}$ are disjoint, and thus $N(x) \cup N'(x') \cup \{x, x'\} \geq 2d-\deg_2(v) + 2$.  Every vertex of $N(x) \cup N'(x') \cup \{x, x'\}$ is within distance $2$ of $u$, and since at most $d+1$ of them are within distance $1$ of $u$, we have that $N(x) \cup N'(x') - N_2(u) \geq d-\deg_2(v) + 1$.  This proves the lemma.
\end{proof}

For the remainder of this paper, we assume that $G$ is a $d$-regular graph such that $G^2$ is not complete.

\section{Regions}
\label{region_sect}

In this section, we discuss regions, a key tool in the proof of our main theorem.  Let $X$ be the set of vertices $x \in G$ satisfying $\deg_2(x) < 4$.  Define an equivalence relation on $X$ by saying that $u \sim v$ if there exists a sequence of vertices $(u=v_0, v_1, \ldots, v_t = v)$ such that for $0 \leq i \leq t-1$, $v_i \in X$ and $d(v_i, v_{i+1}) \leq 2$.

\begin{definition}
A \textit{region} is an equivalence class $X' \subset X$ under $\sim$ together with all neighbors of vertices in $X'$.
\end{definition}
Note that some vertices might not be contained in any region.  We prove some basic properties of regions.

\begin{lemma}
\label{generalregionlemma}
Let $X$ be the set of vertices $x$ satisfying $\deg_2(x) < 4$, and let  $R$ be a region containing $v \in X$.  Then \\
1) $R$ contains at least $d+1$ vertices. \\
2) $R \subset \{v\} \cup N(v) \cup N_2(v)$. \\
3) Let $t := \min \{ \deg_2(x) | x \in R\}$.  Then $R$ contains at most $d+t+1$ vertices. \\
4) $R$ contains at most $d+4$ vertices. \\
5) $R$ is disjoint from any other region $R'$.
\end{lemma}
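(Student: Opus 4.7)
The plan is to establish parts 1, 2, and 5 directly from the definition together with Lemma \ref{HighDegLemma}, and then derive parts 3 and 4 as easy corollaries of part 2. Part 1 is immediate since $R = X' \cup N(X')$ contains $\{v\} \cup N(v)$. Part 5 is also quick: if two regions shared a vertex $y$, then either $y$ would lie in two distinct $\sim$-classes (impossible), or some $x \in X'$ and $x' \in X''$ would both be adjacent to $y$, giving $d(x,x') \leq 2$ and forcing $X' = X''$.

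The substantive work is part 2. The first observation is that any low-degree vertex $x \in X$ lies outside every $N_2'$-set of a low-degree vertex, because Lemma \ref{HighDegLemma} forces $\deg_2 \geq d-2 \geq 5 > 3$ there when $d > 6$. So it suffices to prove $X' \subseteq B_2(v) := \{v\} \cup N(v) \cup N_2(v)$: any $x \in X'$ then automatically lies in $\{v\} \cup N(v) \cup (N_2(v) - N_2'(v))$, and the neighbors of such an $x$ stay inside $B_2(v)$, recovering $N(X') \subseteq B_2(v)$. I would prove $X' \subseteq B_2(v)$ by induction along a $\sim$-chain $v = v_0, v_1, \ldots, v_t = u$ of low-degree vertices with $d(v_i, v_{i+1}) \leq 2$, with inductive hypothesis $v_i \in \{v\} \cup N(v) \cup (N_2(v) - N_2'(v))$.

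In the inductive step I must rule out $v_{i+1} \in N_3(v)$. The case $v_i = v$ is trivial. When $v_i \in N(v)$, necessarily $d(v_i, v_{i+1}) = 2$, and $v_i \in N_2(v_{i+1})$ has the neighbor $v \in N_3(v_{i+1})$, so $v_i \in N_2'(v_{i+1})$; Lemma \ref{HighDegLemma} applied to $v_{i+1}$ forces $\deg_2(v_i) \geq d-2 \geq 5$, contradicting $v_i \in X$. The hard sub-case is $v_i \in N_2(v) - N_2'(v)$ with $v_{i+1} \in N_3(v)$, since now $v$ is not a neighbor of $v_i$ and the direct contradiction breaks down. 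I would instead combine $N(v_i) \subseteq B_2(v)$ (from $v_i \notin N_2'(v)$) with $N(v_i) \subseteq B_2(v_{i+1})$ (since $v_i$ low degree forces $v_i \notin N_2'(v_{i+1})$ by Lemma \ref{HighDegLemma}), giving $N(v_i) \subseteq B_2(v) \cap B_2(v_{i+1})$. The condition $d(v, v_{i+1}) = 3$ forces $N(v) \cap N(v_{i+1}) = \emptyset$, so the intersection decomposes into $N(v) \cap N_2(v_{i+1})$, $N_2(v) \cap N(v_{i+1})$, and $N_2(v) \cap N_2(v_{i+1})$; with sizes $c$, $b$, $a$ respectively, the low-degree bounds $|N_2(v)|, |N_2(v_{i+1})| \leq 3$ give $a + b \leq 3$ and $a + c \leq 3$, hence $a + b + c \leq 6 - a$. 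Since $v_i$ itself lies in the third piece we have $a \geq 1$, so the intersection has size at most $5$; subtracting $v_i$ itself, $d = |N(v_i)| \leq 4$, contradicting $d > 6$.

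Parts 3 and 4 follow immediately. Since $X' \subseteq R$ is nonempty, the minimum $t = \min\{\deg_2(x) : x \in R\}$ is at most $3$, and any $\deg_2$-minimizer must lie in $X'$: a low-degree vertex in $N(X') \setminus X'$ would be $\sim$-equivalent to its neighbor in $X'$ and hence already in $X'$. Applying part 2 to this minimizer yields $R \subseteq B_2$ around a vertex of $\deg_2 = t$, so $|R| \leq d + t + 1$ (part 3), and part 4 follows from $t \leq 3$. The main obstacle throughout is the combinatorial intersection bound on $B_2(v) \cap B_2(v_{i+1})$ in the hardest sub-case of part 2; every other step reduces cleanly to definitions or Lemma \ref{HighDegLemma}.
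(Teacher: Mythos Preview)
Your proof is correct. The overall architecture matches the paper's: Parts 1 and 5 are immediate from the definition, Part 2 is the substantive step proved by induction along a $\sim$-chain, and Parts 3 and 4 are corollaries of Part 2 applied to a $\deg_2$-minimizer in $X'$.

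The genuine difference lies in how the inductive step of Part 2 is handled. The paper argues through the cut at $N_2'(v)$: when the chain first leaves $G_v = \{v\}\cup N(v)\cup (N_2(v)-N_2'(v))$, the step of length $2$ must pass through some $u\in N_2'(v)$, and a short local lemma shows that $u$ cannot have low-degree neighbors on both sides of the cut (if $w\in N(u)\cap G_v$ is low degree then $|N(u)-G_v|<4$, forcing $|N(u)\cap G_v|>d-4$ and hence $\deg_2(w')>3$ for any $w'\in N(u)\cap G'$). Your argument instead bounds $|B_2(v)\cap B_2(v_{i+1})|$ globally: using that both $v$ and $v_{i+1}$ are low degree and at distance $3$, the three pieces of the intersection have total size at most $5$, too small to contain $N(v_i)$ when $d>6$. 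The paper's route is uniform (no split on whether $v_i\in N(v)$ or $v_i\in N_2(v)-N_2'(v)$) but requires the auxiliary neighbor-of-cut-vertex lemma; your route trades that lemma for an elementary intersection count at the cost of an extra case when $v_i\in N(v)$, where you invoke Lemma~\ref{HighDegLemma} directly via $v_i\in N_2'(v_{i+1})$. Both are clean; yours is perhaps more self-contained.
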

\begin{proof}
Part 1 follows from the fact that, by definition, $R$ contains $v$ and all neighbors of $v$.

Note that $G-N_2'(v)$ is disconnected, and the component of $G-N_2'(v)$ that contains $v$ is $G_v := \{v\} \cup N(v) \cup (N_2(v) - N_2'(v))$.  Let $G'$ be a different component. By Lemma \ref{HighDegLemma} and $d>6$, no vertex in $N_2'(v)$ is in $X$.  Consider $u \in N_2'(v)$.  If some $w \in N(u) \cap G_v$ is in $X$, then no $w' \in N(u) \cap G'$ is also in $X$.  To see this, observe that $N_2(w)$ contains $N(u)-G_v$, and so $|N(u) - G_v| < 4$ and $|N(u) \cap G_v| > d-4$.  Since $N_2(w')$ contains $N(u) \cap G_v$, $\deg_2(w') > d-4 = 3$ and $w' \not \in X$.

Suppose by way of contradiction that there exists $v' \in R \cap X$ outside of $G_v$.  By definition of a region, there is a sequence of vertices $\{v = v_0, v_1, \ldots, v_k = v'\}$ such that each $v_i \in X$ and $d(v_i, v_{i+1}) \leq 2$ for all $0 \leq i \leq k-1$.  Let $v_j$ be the first vertex in the sequence outside of $G_v$.  Since $G-N_2'(v)$ is disconnected and no vertex in $N_2'(v)$ is also in $X$, $d(v_{j-1}, v_{j}) = 2$, and there exists a vertex $u \in N_2'(v)$ adjacent to both $v_{j-1}$ and $v_j$.  This contradicts the previous paragraph.  Part 2 follows.  There exists $v \in R \cap X$ with $\deg_2(v) = t$, and Part 3 follows.  Part 4 follows by $t < 4$.

Now consider $x \in R \cap R'$.  By definition of a region, there exist vertices $v \in R \cap X$ and $v' \in R' \cap X$ such that $d(v,x) \leq 1$ and $d(v',x) \leq 1$.  Then $d(v,v') \leq 2$ and thus $R=R'$.  This proves Part 5.
\end{proof}

We define several classes of regions.  Let $R$ be a region with a vertex $v$ satisfying $\deg_2(v) = 1$, and let $N_2(v) = \{u\}$.  Let $G_v$ the component of $G-u$ that contains $v$.  Then $G_v$ contains $d+1$ vertices, namely $\{v\} \cup N(v)$.  In $G_v$, all neighbors of $u$ have degree $d-1$, and all other vertices have degree $d$.  Hence the complement of $G_v$ is a matching on the neighbors of $u$.  Let $t := |G_v \cap N(u)|$, and consider $w \in G_v \cap N(u)$.  Since $u$ contains $d-t$ neighbors outside of $G_v$, $\deg_2(w) = d-t+1$, and $w$ is low degree if and only if $t \geq d-2$.  Then $R$ is either $G_v$ or $G_v \cup \{u\}$, and the latter holds if and only if $t \geq d-2$.  If $t \neq d-1$, then we say that $R$ is an \textit{A region}, and if $t=d-1$, we say that $R$ is a \textit{B region}.  Since a B region contains the complement of a matching on $d-1$ vertices, a B region exists only when $d$ is odd.

Now let us say that $R$ is not an A or B region, but $R$ contains a vertex $v$ satisfying $\deg_2(v) = 2$.  First suppose that $R$ contains a vertex $v'$ with $\deg_2(v') = 2$ and $|N_2'(v')| = 1$.  Then we say that $R$ is a \textit{C region}.  Otherwise, we say that $R$ is a \textit{D region}.

Now suppose that $R$ is a region such that $\deg_2(v) = 3$ for all low degree vertices $v$ in $R$.  Let $k$ be the minimum size of $N_2'(v)$ for low degree vertices in $R$.  In the respective cases that $k = 1, 2,$ or $3$, we say that $R$ is an \textit{E region}, an \textit{F region}, or a \textit{G region}.

\section{Superregions}
\label{super_sect}

We now define \textit{superregions}.  We show that the superregions of $G$ are a partition of $V(G)$ in Section \ref{part_sect}.  Before we specify the superregions, we first define sets of vertices associated with $G$ and the superregions.

Let $\mathcal{U}$ be the set of vertices $u$ in $G$ that satisfy $\deg_2(u) \geq d-2$.  By Lemma \ref{HighDegLemma}, if $v$ is a low degree vetex, then $N_2'(v) \subset \mathcal{U}$.  In defining superregions, we will also designate special sets $\mathcal{W}$ and $\mathcal{N}$ such that if $R$ is a superregion, then $|R \cap (\mathcal{W} \cup \mathcal{N} - \mathcal{U})| \leq \frac{2}{d+1}|R|$.  Let $\mathcal{V} := V(G) - \mathcal{U} - \mathcal{W} - \mathcal{N}$.

\begin{lemma}
Theorem \ref{main_theorem} holds for $G$ if $|\mathcal{U}| \geq \frac{3}{d-3}n$.
\end{lemma}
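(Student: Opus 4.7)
The plan is to double-count distance-$2$ pairs: each unordered pair $\{u,v\}$ with $d_G(u,v)=2$ contributes $1$ to each of $\deg_2(u)$ and $\deg_2(v)$, so
\[
\sum_{v\in V(G)}\deg_2(v) \;=\; 2\bigl(e(G^2)-e(G)\bigr).
\]
I will bound this sum from below by splitting $V(G)$ into $\mathcal{U}$ and its complement, using $\deg_2(u)\ge d-2$ for $u\in\mathcal{U}$ (immediate from the definition of $\mathcal{U}$) together with a cheap universal lower bound on the remaining vertices.

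The key auxiliary fact is that every vertex $v$ has $\deg_2(v)\ge 1$. Indeed, if $\deg_2(v)=0$ then $\{v\}\cup N(v)$ would be closed under adjacency, hence a connected component of $G$; by connectivity this would force $n=d+1$ and $G=K_{d+1}$, contradicting the standing hypothesis that $G^2$ is not complete.

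Combining the two bounds,
\[
\sum_{v\in V(G)}\deg_2(v)\;\ge\;(d-2)|\mathcal{U}|\;+\;(n-|\mathcal{U}|)\;=\;(d-3)|\mathcal{U}|\;+\;n.
\]
The hypothesis $|\mathcal{U}|\ge\tfrac{3n}{d-3}$ is calibrated precisely so that $(d-3)|\mathcal{U}|\ge 3n$, yielding $\sum_v\deg_2(v)\ge 4n$ and hence $e(G^2)-e(G)\ge 2n$. Since $d>6$ makes $\tfrac{2}{d+1}+\tfrac{3}{d-3}>0$, we conclude $e(G^2)-e(G)\ge 2n>2n\bigl(1-\tfrac{2}{d+1}-\tfrac{3}{d-3}\bigr)$, which is Theorem \ref{main_theorem}.

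I do not expect any serious obstacle: the argument is a one-step double count. The only bit of content is noticing that the threshold $\tfrac{3n}{d-3}$ is exactly what one needs so that the contribution $(d-3)|\mathcal{U}|$ produced by splitting at $\mathcal{U}$, added to the baseline $n$ coming from the $\deg_2(v)\ge 1$ bound on the complement, reaches $4n$. Using the $\deg_2(u)\ge d-2$ bound on $\mathcal{U}$ alone would give only $e(G^2)-e(G)\ge \tfrac{3(d-2)n}{2(d-3)}\approx \tfrac{3n}{2}$, which falls short of $2n$ for large $d$; the trivial $\deg_2\ge 1$ bound on $V\setminus\mathcal{U}$ is what closes the gap.
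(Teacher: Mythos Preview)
Your proof is correct and follows essentially the same approach as the paper: split $V(G)$ into $\mathcal{U}$ and its complement, use $\deg_2\ge d-2$ on $\mathcal{U}$ and $\deg_2\ge 1$ elsewhere, and observe that the threshold $|\mathcal{U}|\ge \tfrac{3n}{d-3}$ makes the total at least $4n$. The paper's proof is terser (it omits the justification that $\deg_2(v)\ge 1$ and writes the bound directly with $|\mathcal{U}|$ set to its minimum), but the content is identical.
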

\begin{proof}
If $v \not \in \mathcal{U}$, then $\deg_2(v) \geq 1$.  We then have that $\sum_{v \in V(G)} \deg_2(v) \geq \frac{3}{d-3}n (d-2) + \frac{d-6}{d-3}n(1) \geq 4n$ as desired.
\end{proof}

We therefore assume that $$|\mathcal{U}| < |V(G)|\frac{3}{d-3},$$ and then, since superregions partition $V(G)$, $$|\mathcal{V}| = |V(G) - |\mathcal{W}| - |\mathcal{N}| - |\mathcal{U}| > |V(G)|\left( 1 -\frac{2}{d+1} - \frac{3}{d-3} \right).$$

For every superregion $R$, we associate a collection $S_R$ of at least $4|R \cap \mathcal{V}|$ ordered pairs of vertices of the form $(x,y)$ such that $d(x,y) = 2$.  Since the superregions partition $V(G)$, we have that $\sum_R |S_R| \geq 4|\mathcal{V}|$.  This proves Theorem \ref{main_theorem} as long as the $S_R$ are disjoint, a matter that is partially addressed below and addressed more fully in Section \ref{proof_sect}.  We partition $S_R$ into subsets $S_{Ri}$, for $1 \leq i \leq 4$, as follows.  For each pair $(x,y) \in S_R$,
\begin{list}{$\bullet$}{}
\item if $x$ and $y$ are both in $R$, then $(x,y) \in S_{R1}$, 
\item if $x \in \mathcal{V} \cap R$ and $y \not \in R$, then $(x,y) \in S_{R2}$, 
\item if $y \in \mathcal{V} \cap R$ and $x \in \mathcal{U}-R$, then $(x,y) \in S_{R3}$, 
\item if $x \in \mathcal{W} \cap R$ and $y \not \in R$, then $(x,y) \in S_{R4}$.
\end{list}

\begin{lemma}
\label{duppairs}
Suppose that $(x,y) \in S_{R} \cap S_{R'}$ for distinct superregions $R$ and $R'$.  Then either $(x,y) \in S_{R4} \cap S_{R'3}$ or $(x,y) \in S_{R3} \cap S_{R'4}$.
\end{lemma}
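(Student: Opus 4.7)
The plan is to prove this by an exhaustive case analysis over the sixteen possible pairs $(S_{Ri}, S_{R'j})$ with $i,j\in\{1,2,3,4\}$, using only two structural facts: that distinct superregions are disjoint (established in Section \ref{part_sect}, which I am allowed to invoke), and that $\mathcal{V}$ is by definition disjoint from both $\mathcal{U}$ and $\mathcal{W}$. Each forbidden case will violate one of these two disjointness conditions on the coordinates of $(x,y)$.

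First I would observe that membership in $S_{Ri}$ forces at least one of $x,y$ to lie in $R$: in $S_{R1}$ both do; in $S_{R2}$ and $S_{R4}$ the coordinate $x$ lies in $R$; in $S_{R3}$ the coordinate $y$ lies in $R$. Since $R\neq R'$ are disjoint superregions, any case where the same coordinate is forced to lie in both $R$ and $R'$ is immediately ruled out. This kills $(S_{R1},S_{R'j})$ for every $j$, as well as $(S_{R2},S_{R'1})$, $(S_{R2},S_{R'2})$, $(S_{R4},S_{R'1})$, and $(S_{R4},S_{R'4})$, and symmetrically $(S_{Ri},S_{R'1})$ for all $i$.

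Next I would handle the remaining forbidden cases using $\mathcal{V}\cap\mathcal{U}=\mathcal{V}\cap\mathcal{W}=\emptyset$. In $(S_{R2},S_{R'3})$ we would need $x\in\mathcal{V}$ and $x\in\mathcal{U}$, impossible; in $(S_{R2},S_{R'4})$ we would need $x\in\mathcal{V}$ and $x\in\mathcal{W}$, impossible; in $(S_{R3},S_{R'2})$ the roles of $x$ and coordinate are swapped and the same contradiction appears with $x\in\mathcal{U}\cap\mathcal{V}$; and in $(S_{R4},S_{R'2})$ we get $x\in\mathcal{W}\cap\mathcal{V}$, impossible. Finally, $(S_{R3},S_{R'3})$ is excluded because it forces $y\in R\cap R'$, and $(S_{R4},S_{R'4})$ because it forces $x\in R\cap R'$. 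The only surviving combinations are $(S_{R3},S_{R'4})$ and $(S_{R4},S_{R'3})$, which is precisely the conclusion of the lemma.

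The argument is essentially bookkeeping, so there is no real obstacle; the only thing to be careful about is that the definitions of $S_{R2}$ and $S_{R4}$ both specify properties of $x$ (the first coordinate), while $S_{R3}$ specifies properties of $y$, so the case analysis is not symmetric in $i$ and $j$ and must be carried out in full rather than shortened by symmetry. One should also note explicitly that the argument uses nothing about $\mathcal{N}$ or about the particular value of $\deg_2$ of the endpoints, only the set-theoretic definitions of the $S_{Rj}$ and the disjointness facts listed above.
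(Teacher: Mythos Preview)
Your proof is correct and uses exactly the same two ingredients as the paper: disjointness of distinct superregions, and $\mathcal{V}\cap\mathcal{U}=\mathcal{V}\cap\mathcal{W}=\emptyset$. The paper's version is just a compressed form of your sixteen-case enumeration: it first observes that one coordinate lies in $R$ and the other in $R'$, takes without loss of generality $x\in R$ and $y\in R'$, deduces $(x,y)\in S_{R'3}$ (hence $x\in\mathcal{U}$, so $x\notin\mathcal{V}$), and then concludes $(x,y)\in S_{R4}$. One minor organizational remark: the cases $(S_{R2},S_{R'4})$ and $(S_{R4},S_{R'2})$ are already killed by your first rule (both force $x\in R\cap R'$), so you need not invoke $\mathcal{V}\cap\mathcal{W}=\emptyset$ there, though of course that argument also works.
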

\begin{proof}
By definition of the $S_{Ri}$, one of $x$ or $y$ is in $R$ and the other is in $R'$.  Without loss of generality, assume that $x \in R$ and $y \in R'$.  Then $(x,y) \in S_{R'3}$.  Then $x \not \in \mathcal{V}$, and so $(x,y) \in S_{R4}$.
\end{proof}

As we specify the superregions and the sets $S_R$ over the rest of this section and Section \ref{tail_sect}, observe that the following holds by construction.

\begin{lemma}
\label{lemma31}
Let $v$ be a low degree vertex of a superregion $R$.  Then there are at most $4-\deg_2(v)$ vertices $u$ such that $(u,v) \in S_{R3}$.
\end{lemma}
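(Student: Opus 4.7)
The approach is to reduce the lemma to the trivial bound $|N_2(v)|=\deg_2(v)$ on the number of candidates for $u$, and then to defer the only nontrivial case to a case analysis over the superregion constructions.

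First, since $(u,v) \in S_{R3}$ forces $d(u,v)=2$, any such $u$ lies in $N_2(v)$. This immediately gives the upper bound $\deg_2(v)$, which already matches or beats $4-\deg_2(v)$ whenever $\deg_2(v) \in \{1,2\}$. So the only case requiring work is $\deg_2(v) = 3$, in which we must show that at most one of the three vertices of $N_2(v)$ is simultaneously in $\mathcal{U}$ and outside $R$.

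For $\deg_2(v)=3$, the region containing $v$ is of type E, F, or G (and may be embedded in a tail). By Lemma \ref{HighDegLemma}, every vertex of $N_2'(v)$ lies in $\mathcal{U}$, and vertices of $N_2(v) \setminus N_2'(v)$ have all their neighbors within $\{v\}\cup N(v)\cup N_2(v)$, hence sit very close to the region and are natural candidates for absorption into $R$. The plan is to verify, as each superregion type is specified in the remainder of this section and in Section \ref{tail_sect}, that $R$ is defined so as to absorb all but at most one of the three vertices of $N_2(v)$ into $R$ itself (or into $V(G)-\mathcal{U}$). Concretely, this amounts to checking for each of E, F, G, and each tail construction, that at least $|N_2'(v)|-1$ vertices of $N_2'(v)$ are included in $R$.

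The main obstacle is not the logical argument but balancing this absorption requirement against the other design constraints on superregions: the ceiling $|R \cap (\mathcal{W}\cup\mathcal{N} - \mathcal{U})|\leq \tfrac{2}{d+1}|R|$, the lower bound $|S_R|\geq 4|R\cap\mathcal{V}|$, and the disjointness of the $S_R$ deferred to Section \ref{proof_sect}. The lemma is therefore presented as a design criterion guiding the upcoming definitions of superregions and tails, and its proof is simply the observation that by inspection the criterion is met in each case as the definitions are made.
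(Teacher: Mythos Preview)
Your high-level framing matches the paper: the lemma is stated as a design criterion, and the paper's ``proof'' is literally the sentence ``observe that the following holds by construction'' as the sets $S_R$ are specified case by case. Your observation that the trivial bound $|\{u:(u,v)\in S_{R3}\}|\le\deg_2(v)$ already handles $\deg_2(v)\le 2$ is a clean simplification the paper does not make explicit.

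However, your proposed mechanism for the case $\deg_2(v)=3$ is wrong. You claim the verification amounts to checking that at least $|N_2'(v)|-1$ vertices of $N_2'(v)$ are absorbed into $R$ (or lie outside $\mathcal{U}$). This fails already for G regions: there $N_2(v)=N_2'(v)$ has three vertices, all in $\mathcal{U}$ by Lemma~\ref{HighDegLemma}, and none of them need lie in the region $R$ (they are not low degree, and need not be neighbors of low degree vertices in the equivalence class). It also fails for A regions with $|V|=3$: for $x\in X-\mathcal{W}$ one has $N_2(x)=\{x',y,y'\}$ with $y,y'\in N_2'(x)\subset\mathcal{U}$ and $y,y'\notin R$, so two of the three lie in $\mathcal{U}-R$.

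The actual reason the lemma holds is not absorption but the explicit throttling in the construction of $S_R$. For D, E, F, G regions the set $A_v$ is chosen with $|A_v|=1$ when $\deg_2(v)=3$, so exactly one pair $(a,v)$ enters $S_R$. For A regions the set $S'_R$ is sized to contain $\max(0,4-|V|)$ pairs $(y,x)$ per vertex $x\in X-\mathcal{W}$, so again at most one when $|V|=\deg_2(x)=3$. The analogous bookkeeping is visible in each tail construction. In short, the bound comes from how many pairs of the form $(\cdot,v)$ are \emph{placed} into $S_R$, not from how many candidates exist in $N_2(v)\cap(\mathcal{U}-R)$.
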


\subsection{Single vertex superregions}

If $v$ is a vertex that is not contained in any region, then say that $\{v\}$ is a superregion $R$.  Since $v$ is not in a region, $\deg_2(v) \geq 4$.  If $v \not \in \mathcal{V}$, then $S_R = \emptyset$.  Otherwise, let $S_R = S_{R2}$ be $\{(v,u): u \in N_2(v)\}$.

\subsection{D, E, F, G regions}

If $R$ is a region that is not contained in any other superregion, then $R$ is a superregion.  If $R$ is a D, E, F, or G region, for all $v \in R \cap \mathcal{V}$, $\deg_2(v) + |N_2'(v)| \geq 4$.  Choose $A_v \subseteq N_2'(v)$ so that $A_v = N_2'(v)$ if $\deg_2(v) = 2$, $|A_v| = 1$ if $\deg_2(v) = 3$, and otherwise $A_v = \emptyset$.  Set $$S_{v} := \{(v,a)\}_{a \in N_2(v)} \cup \{(a,v)\}_{a \in A_v}.$$

Then set $S_{R} := \cup_{v \in R \cap \mathcal{V}} S_{v}$.  The pairs $\{v,a\}$ are either in $S_{R1}$ or $S_{R2}$, and the pairs $\{a,v\}$ are either in $S_{R1}$ or $S_{R3}$.  Since $A_v \cap \mathcal{V} = \emptyset$, the $S_v$ are disjoint, and thus $|S_R| \geq 4|R \cap \mathcal{V}|$.  The sets $A_v$ are not chosen arbitrarily, but the choice will be made strategically in order to insure that $S_R \cap S_{R'} = \emptyset$ for all superregions $R' \neq R$.  This matter is discussed more fully below.

\subsection{A regions}

Next, suppose that $R$ is an A region and a superregion.  Let $V$ be the set of vertices $v \in R$ with $\deg_2(v) = 1$, let $u$ be the unique vertex in $N_2(v)$ for each vertex in $v \in V$, and let $X$ be the set of remaining vertices of $R$.  Choose distinct $w_1, w_2 \in X$ and set $R \cap \mathcal{W} = \{w_1, w_2\}$.  By definition of an A region, $|V| \geq 3$.  Note that $\deg_2(x) = |V|$ for $x \in X$.  Let $S_{R}$ be the the union of the following sets of pairs: 
\begin{list}{$\bullet$}{}
\item if $|V| < d-2$ (so that $X \cap \mathcal{U} = \emptyset$), all $(|X|-2)(|V|)$ pairs of the form $(x,y)$ for $x \in X-\mathcal{W}$ and $y \in N_2(x)$ (these pairs are either in $S_{R1}$ or $S_{R2}$);
\item $(|X|-2)\max(0,4-|V|)$ pairs of the form $(y,x)$ for $x \in X-\mathcal{W}$ and $y \in N_2(x) - R$ (call this set $S'_R$);
\item all $|V|$ pairs $(v,u)$ for $v \in V$ (these pairs are either in $S_{R1}$ or $S_{R2}$);
\item all $|V|$ pairs $(u,v)$ for $v \in V$ (these pairs are either in $S_{R1}$ or $S_{R3}$);
\item and all $2|V|$ pairs of the form $(w,y)$ for $w \in \{w_1, w_2\}$ (these pairs are in $S_{R4}$ if $y \not \in R$ and in $S_{R1}$ otherwise).
\end{list}
Then $|S_{R}| \geq 4(|V| + |X| - 2) = 4|R \cap \mathcal{V}|$.

Observe that $S'_R \subseteq S_{R3}$.  To see this, if $(|X|-2)\max(0,4-|V|) \neq 0$, which happens only if $|V| = 3$, then for $x \in X - \mathcal{W}$, $\deg_2(x) = 3$.  Let $N_2(w) = \{x',y,y'\}$ for $x' \in R$ and $y, y' \not\in R$.  The only possible neighbors of $y$ in $N_1(x) \cup N_2(x)$ are $u$ and $y'$, and thus $y$ is adjacent to some vertex that is distance $3$ from $x$.  The same is true for $y'$.  Thus $\{y, y'\} \subset N_2'(x)$, showing that $(y,x), (y',x) \in S_{R3}$.

\subsection{C regions}

Suppose that $R$ is a C region.  Let $v$ be a vertex in $R$ with $\deg_2(v) = 2$ and $N_2'(v) = \{u\}$.  Let $G_v$ be the component of $G-\{u\}$ that contains $v$.  Let $V$ be the set of vertices that are distance $2$ from $u$ and in $G_v$.  Choose a vertex $w$ in $R \cap N(u)$ and set $R \cap \mathcal{W} = \{w\}$, and let $X$ be the set of remaining vertices in $R \cap N(u)$.  We prove several lemmas on the structure of C regions.

\begin{lemma}
\label{cregionlemma2}
With all quantities as above, $V(R) \subseteq V \cup X\cup \{w,u\}$.
\end{lemma}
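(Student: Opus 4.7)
The plan is to start from Lemma \ref{generalregionlemma}(2), which applied to $v \in X$ (using $\deg_2(v)=2<4$) gives $R \subseteq \{v\}\cup N(v)\cup N_2(v) = G_v \cup \{u\}$. Since $R \cap N(u) = X \cup \{w\}$ by construction and $G_v = \{v\}\cup N(v)\cup \{u'\}$, the task reduces to showing that every $r \in R$ with $r \notin N(u) \cup \{u\}$ satisfies $d(r,u) = 2$, i.e., lies in $V$. The case $r = v$ is immediate, and the two remaining cases are $r = u'$ with $u' \notin N(u)$, and $r \in N(v) \setminus N(u)$.

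For $r = u'$, I would exploit that $u' \in N_2(v) \setminus N_2'(v)$ implies $N(u') \subseteq \{v\}\cup N(v)\cup \{u\}$. Combined with $u'\notin N(v)$ and $u'\notin N(u)$, this forces $N(u') \subseteq N(v)$; since $|N(u')| = d = |N(v)|$, $u'$ is adjacent to every vertex of $N(v)$. As $N(u)\cap N(v)\neq\emptyset$ (because $u \in N_2(v)$), $u'$ shares a neighbor with $u$, giving $d(u',u) = 2$.

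The hard case is $r \in N(v) \setminus N(u)$, which I would handle by contradiction. Assume $d(r,u) \geq 3$; then $N(r) \subseteq G_v$ and $N(r)\cap N(u) = \emptyset$. The $d-1$ neighbors of $r$ besides $v$ occupy $d-1$ of the $d$ vertices of $(N(v)\setminus\{r\}) \cup \{u'\}$, missing exactly one vertex $m$. The requirement $N(r)\cap N(u) = \emptyset$, together with $N(u)\cap N(v)\neq\emptyset$, then forces $m \in N(v)$, $N(u)\cap N(v) = \{m\}$, and $u'\notin N(u)$; in particular $N(u)\cap G_v = \{m\}$.

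To close the argument, I would show that these constraints imply $\deg_2(r) = 1$. Any vertex outside $G_v \cup \{u\}$ has $G$-distance at least $4$ from $r$, since every path from $r$ to that side must traverse $u$ and $d(r,u) \geq 3$; hence $N_2(r) \subseteq G_v \cup \{u\}$. The only non-neighbors of $r$ inside $G_v \cup \{u\}$ are $m$ and $u$, and $d(r,m) = 2$ via $v$ while $d(r,u) \geq 3$ by assumption. Thus $\deg_2(r) = 1$, contradicting that $R$ is a C region: by definition, any region containing a vertex with $\deg_2 = 1$ is an A or B region.
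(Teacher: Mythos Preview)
Your argument is correct, but it works harder than necessary. The paper's proof dispatches all cases at once with a single counting step: once you know $R \subseteq G_v \cup \{u\}$ and $|G_v| = d+2$, you observe that any $v' \in R$ with $d(v',u) \geq 3$ must satisfy $\{v'\} \cup N(v') \cup N_2(v') \subseteq G_v$, hence $\deg_2(v') \leq |G_v| - (d+1) = 1$, contradicting the C-region hypothesis. No case split on whether $v' = v$, $v' = u'$, or $v' \in N(v)\setminus N(u)$ is needed, and the explicit identification of the missing neighbor $m$ is superfluous.

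Your route does work: the $u'$ case is handled by a direct adjacency argument, and your harder case ultimately reproduces the paper's contradiction $\deg_2(r)=1$ after pinning down the structure of $N(r)$ and $N(u)\cap G_v$. What the paper's approach buys is uniformity and brevity---the cardinality bound $|G_v|=d+2$ does all the work without ever touching the internal structure of $G_v$. Your approach, by contrast, extracts more information (e.g., that $N(u)\cap G_v$ is a single vertex when such an $r$ exists), but none of that extra information is needed for the lemma at hand.
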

\begin{proof}
By Lemma \ref{generalregionlemma}, $V(R) \subset G_v \cup \{u\}$.  Furthermore, $G_v \cup \{u\} = \{v\} \cup N(v) \cup N_2(v)$ and thus $|G_v| = d+2$.  It suffices to show that if $v' \in R$, then $d(v',u) \leq 2$.  Suppose by way of contradiction that $d(v',u) \geq 3$.  Then $\{v'\} \cup N(v') \cup N_2(v') \subseteq G_v$, and thus $\deg_2(v') \leq 1$, a contradiction to the definition of a C region.
\end{proof}

\begin{lemma}
\label{cregionlemma1}
Let all quantities be as above.  Then $G_v = R$.
\end{lemma}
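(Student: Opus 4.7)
The plan is to combine Lemma \ref{cregionlemma2}---which gives $V(R) \subseteq G_v \cup \{u\}$---with the fact that $R$ always contains $\{v\} \cup N(v)$. Writing $N_2(v) = \{u, u''\}$, we have $G_v = \{v\} \cup N(v) \cup \{u''\}$, so it suffices to show $u'' \in R$ and $u \notin R$. The key preliminary is a parity observation: set $t = |N(u) \cap N(v)|$, let $a = 1$ if $u \sim u''$ and $0$ otherwise, and set $k = |N(u) \setminus G_v|$, so $t + a + k = d$ and $k \geq 1$ since $u \in N_2'(v)$. Counting edges in the induced subgraph on $G_v$---where $v$ has degree $d$, $u''$ has degree $d - a$, and each $w \in N(v)$ has degree $d$ or $d - 1$ according to whether $w$ is adjacent to $u$---the total degree is $d^2 + 2d - a - t$, and requiring this to be even forces $a + t \equiv d \pmod{2}$. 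Hence $k$ is even and $k \geq 2$.

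With $k \geq 2$ in hand I would show $u'' \in R$ by splitting on $a$. If $a = 0$, then $u''$ has no neighbor at distance $3$ from $v$, so $N(u'') \subseteq N(v)$, and equality holds by cardinality; a direct computation then gives $N_2(u'') = \{v, u\}$, so $u''$ is low-degree and $\sim$-equivalent to $v$. If $a = 1$, then $N(u'') = (N(v) \setminus \{x_0\}) \cup \{u\}$ for some unique $x_0 \in N(v)$; since $|N(v) \setminus N(u)| = 1 + k \geq 3$, I can pick $w \in N(v) \setminus N(u)$ with $w \neq x_0$, and an A-region-style count shows that $w$ has all its neighbors in $G_v$, exactly one $G_v$-non-neighbor $a_w$, and $N_2(w) = \{a_w, u\}$, so $\deg_2(w) = 2$. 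Then $w$ is low-degree and $\sim$-equivalent to $v$, and since $w \sim u''$ we have $u'' \in R$ as a neighbor of $w$.

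For $u \notin R$, Lemma \ref{HighDegLemma} gives $\deg_2(u) \geq d - 1 > 3$, so $u$ is not itself in $X$; hence $u \in R$ would require $u$ to be adjacent to some low-degree $v^* \in R \cap G_v$, which would lie in $N(v) \cup \{u''\}$. I would enumerate the low-degree vertices of $R$: each $w \in N(v) \cap N(u)$ satisfies $\deg_2(w) \geq 2 + k \geq 4$ (two non-neighbors inside $G_v$, plus the $k$ neighbors of $u$ in the other component, each at distance $2$ via $u$), so is not low-degree, and when $a = 1$ the same bound gives $\deg_2(u'') \geq 2 + k \geq 4$. The remaining low-degree vertices of $R$ lie in $\{v, u''\} \cup (N(v) \setminus N(u))$, none of which is adjacent to $u$ (by the case assumption or by definition), and we conclude $u \notin R$.

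The main obstacle is the parity step yielding $k \geq 2$. Without it the natural candidates for low-degree neighbors of $u$---elements of $N(v) \cap N(u)$, and $u''$ when $u \sim u''$---could be low-degree with $\deg_2 = 3$, which would pull $u$ into $R$ via those vertices and make $R = G_v \cup \{u\}$ rather than $G_v$. Once $k \geq 2$ is secured, the remaining case analysis is routine.
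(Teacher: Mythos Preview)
Your proof is correct and follows essentially the same strategy as the paper. The paper's parity step is phrased as ``$|V|=3$ is impossible,'' where $V$ is the set of vertices of $G_v$ at distance $2$ from $u$; since $|V|=k+2$ in your parametrization, this is exactly your conclusion that $k$ is even and hence $k\geq 2$. For $u\notin R$ the paper argues, as you do, that every $x\in N(u)\cap G_v$ has $\deg_2(x)=|V|\geq 4$. For $u''\in R$ the paper avoids your case split on $a$ by observing uniformly that every vertex of $V$ is low-degree (any vertex of $G_v$ not adjacent to $u$ has $\deg_2\leq 2$) and hence lies in $R$; then, since $|V|\geq 3$ and $u''$ has at most one non-neighbor in $G_v\setminus\{v,u''\}$, some $v'\in V$ is adjacent to $u''$, placing $u''$ in $R$. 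This is slightly more economical than your split on $a$, but it is the same idea.
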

\begin{proof}
First we show that $G_v \subset R$.  Since $G_v$ has $d+2$ vertices and $N(v) \subset G_v$, there exists one vertex $z \in G_v$ that is not adjacent to $v$.  By definition of a region, all vertices of $G_v - \{z\}$ are in $R$.  Each $v' \in V$ is in $R$ since $v'$ is within distance $2$ of $v$ and is low degree.  Since $u$ must have at least one neighbor outside of $G_v$, $u$ has at most $d-1$ neighbors in $G_v$, and by Lemma \ref{cregionlemma2}, $|V| \geq 3$.  Then since $\deg(z) = d$, at least one vertex $v' \in V$ is adjacent to $z$.  Thus $z \in R$ since $v'$ is a low degree vertex in $R$.

Finally, we must show that $u \not \in R$.  Note that $|V| = 3$ is impossible, since then $G_v$ would contain $d-1$ vertices of degree $d-1$ and $3$ vertices of degree $d$, and the sum of the degrees of all vertices would be odd.  Thus $|V| \geq 4$, and since $u$ has $d+2-|V|$ neighbors in $G_v$, $u$ has $|V|-2 \geq 2$ neighbors outside of $G_v$.  If $x \in R \cap N(u)$, then $N_2(x)$ contains $2$ vertices in $G_v$ and $|V|-2 \geq 2$ outside of $G_v$, and thus $x$ is not low degree.  By construction of a region, $u \not \in R$.
\end{proof}

\begin{lemma}
\label{cregionlemma}
If $R$ is a C region, and $R$ contains a vertex $v$ with $\deg_2(v) = 2$ and $N_2'(v) = \{u\}$, then $R$ contains at least two vertices adjacent to $u$.
\end{lemma}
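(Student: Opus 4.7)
The plan is to reduce the statement to a parity computation inside $G_v$. By Lemma \ref{cregionlemma1}, we have $R = G_v$, so writing $t := |G_v \cap N(u)|$, the assertion to prove is exactly $t \geq 2$.

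First I would dispatch the easy lower bound $t \geq 1$: the vertex $v$ lies in $G_v$ and satisfies $d(v,u) = 2$, so some neighbor of $v$ must be adjacent to $u$, and that neighbor automatically lies in $G_v$. The main step is then a parity argument on the induced subgraph $G[G_v]$. Because $G_v$ is a full connected component of $G - \{u\}$, the only neighbor in $G$ of any $w \in G_v$ that can lie outside $G_v$ is $u$ itself, so $\deg_{G[G_v]}(w) = d$ unless $w \in N(u)$, in which case it equals $d-1$. Recalling from the proof of Lemma \ref{cregionlemma2} that $|G_v| = d+2$, summing over the vertices of $G_v$ gives
$$\sum_{w \in G_v} \deg_{G[G_v]}(w) = d(d+2) - t,$$
which must be even since it counts each edge of $G[G_v]$ twice. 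Because $d(d+2)$ is even, $t$ must be even, and together with $t \geq 1$ this forces $t \geq 2$.

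I do not expect any serious obstacle. The argument is closely parallel to the parity computation already used in the proof of Lemma \ref{cregionlemma1} to rule out $|V| = 3$, merely applied to the count of neighbors of $u$ in $G_v$ rather than to the non-neighbors. The only bookkeeping step is invoking Lemma \ref{cregionlemma1} at the outset so that $R \cap N(u)$ really coincides with $G_v \cap N(u)$.
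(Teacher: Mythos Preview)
Your parity step contains an error that breaks the argument for odd $d$. You assert that $d(d+2)$ is even, but when $d$ is odd both $d$ and $d+2$ are odd, so $d(d+2)$ is odd. In that case the handshake identity forces $t$ to be \emph{odd}, and combined with $t \geq 1$ you obtain only $t \geq 1$, not $t \geq 2$. Since the paper works under the standing hypothesis $d > 6$ with no parity restriction (and in fact tails and B regions, which pervade the paper, require $d$ odd), this is a genuine gap, not a cosmetic one.

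The paper's own proof avoids parity entirely and uses a distance/pigeonhole argument instead: assuming $R \cap N(u) = \{x\}$, the vertex $x$ has only $d-1$ neighbors inside the $(d+2)$-vertex set $R = G_v$, so some $x' \in R$ is neither equal nor adjacent to $x$; since $G_v$ is a component of $G-\{u\}$ and $x$ is the sole neighbor of $u$ in $G_v$, this $x'$ satisfies $d(x',u) \geq 3$, contradicting Lemma~\ref{cregionlemma2}. Your reduction to $G_v$ via Lemma~\ref{cregionlemma1} is fine; you could salvage the argument by replacing the parity step with this counting/distance observation.
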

\begin{proof}
Suppose by way of contradiction that $R \cap N(u) = \{x\}$.  By Lemma \ref{cregionlemma1}, $R-\{u\} = \{v\} \cup N(v) \cup N_2(v) - N_2'(v)$.  Thus some vertex $x'$ of $R-\{u\}$ is not adjacent to $x$, and $d(x',u) \geq 3$.  This contradicts Lemma \ref{cregionlemma2}.
\end{proof}

Let $y_1$ and $y_2$ be two distinct neighbors of $u$ outside of $R$, which exist as in the proof of Lemma \ref{cregionlemma1}.  Also observe that $u \in \mathcal{U}$.  Let $S_{R}$ be the union of the following sets:
\begin{list}{$\bullet$}{}
\item the $|V| + 2|X|$ pairs of the form $(x,y)$ for $x \in V \cup X$ and $y \in R$.  These pairs are each in $S_{R1}$, and they exist since every vertex in $V$ and $X$ have, respectively $1$ and $2$ non-neighbors in $G_v$.  Furthermore, $(G_v)^2$ is complete since in $G_v$, every vertex has degree at least $d-1$ and there are $d+2$ vertices total;
\item the $|V|$ pairs $(v',u)$ for $v' \in V$ (these pairs are in $S_{R2}$);
\item the $|V|$ pairs $(u,v)$ for $v \in V$ (these pairs are in $S_{R3}$);
\item the $|V|$ pairs of the form $(w,y)$ for $y \in N_2(w)$ (these pairs are either in $S_{R1}$ or $S_{R4}$);
\item and if $|V| < d-2$, the $2|X|$ pairs $(x,y_1), (x,y_2)$ for each $x \in X$ (call this set of pairs $S'_R$).
\end{list}
As in the proof of Lemma \ref{cregionlemma1}, $\deg_2(x) = |V|$ for all $x \in R \cap N(u)$, and the condition that $|V| < d-2$ is equivalent to $x \not \in \mathcal{U}$ for any $x \in R \cap N(u)$, and so $S'_R \subset S_{R2}$.  We have that $|S_R| \geq 4|R \cap \mathcal{V}|$.

B regions are discussed in the context of tails in the next section.

\section{Tails and superregions}
\label{tail_sect}

In this section we define several types of superregions that are based on a tail subgraph.  Every superregion described in this section contains a B region, and thus these superregions exist only when $d$ is odd.

\subsection{Tails}

The following construction, a tail, is a subgraph of all superregions defined in this section.  A tail itself is a superregion unless it is contained in a larger superregion.

\begin{definition}
A tail is constructed as follows.  Let $R_1$ be a B region, and for $2 \leq i \leq k$, let $R_i$ be a clique on $d+1$ vertices with the edge $v_{i}v_{i}'$ removed.  Let $v$ be the degree $d-1$ vertex of $R_1$.  Add edges $vv_{2}$ and $v_{i}'v_{i+1}$ for $2 \leq i \leq k-1$.  See Figure \ref{tail_figure}.
\end{definition}

The $R_i$ are the \textit{segments} of $T$.  We may have $k=1$, in which case the tail is a B region.  An \textit{improper tail} is contained in a larger tail, and otherwise a tail is \textit{proper}.  If $T$ is a tail, let $u_T$ be the unique vertex that is adjacent to a vertex in $T$ but not itself in $T$, and let $w_T$ be the unique vertex in $T$ that is adjacent to $u_T$.

\begin{figure}[h]	
\centerline{
\mbox{\includegraphics[width=4.50in]{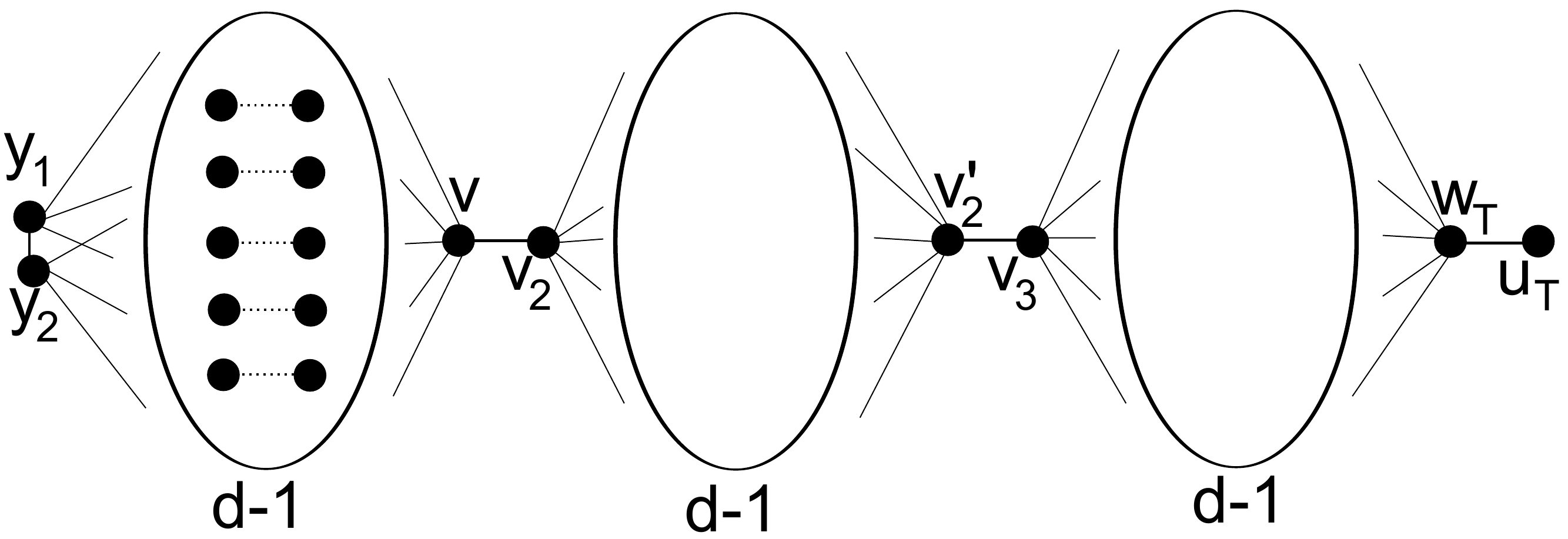}}
}
\caption{Tail.  Note that $u_T$ is not part of the tail.}
\label{tail_figure}
\end{figure}

Let $T$ be a tail.  Let $\mathcal{N} \cap T$ be $\{y_1, y_2\}$ such that $y_1$ and $y_2$ are the two vertices of $R_1$ that satisfy $\deg_2(y_1) = \deg_2(y_2) = 1$.  Associate the following sets of pairs of vertices with $T$: 
\begin{list}{$\bullet$}{}
\item $S_{T1}$ is the set of $(4k-3)(d-1)$ pairs of vertices $(x,y)$ and $(y,x)$ such that $x, y \in T$, $d(x,y) = 2$, and $\deg_2(x) = 2$,
\item $S_{T2}$ is the set of $d-1$ pairs of vertices of the form $(x,u_T)$ such that $x \in T$, $d(x,u_T) = 2$, $\deg_2(x) = 2$,
\item $S_{T3}$ is the set of $d-1$ pairs of vertices of the form $(u_T,x)$ such that $x \in T$, $d(x,u_T) = 2$, $\deg_2(x) = 2$.
\end{list}

In the event that $T$ is a superregion, let $Y$ be the set of $d-1$ neighbors of $u_T$ that are not in $T$ and set $T \cap \mathcal{W} = \{w_T\}$. Then say that
\begin{list}{$\bullet$}{}
\item $S_{T4}$ is the set of $d-1$ pairs $(w_T,y)$ for $y \in Y$.
\end{list}
Then $T \cap \mathcal{V}$ contains $d-1$ vertices in each segment of $T$, and $|S_T| = |S_{T1}| + |S_{T2}| + |S_{T3}| + |S_{T4}| = 4|T \cap \mathcal{V}|$.

We now define a snake graph, our first exception to Conjecture \ref{square_conj}.  See Figure \ref{snake_figure} for an illustration.

\begin{definition}
A \textit{snake graph} $G$ is the union of two tails $T \cup T'$, together with an edge $u_Tu_{T'}$.
\end{definition}

\begin{figure}[h]	
\centerline{
\mbox{\includegraphics[width=4.50in]{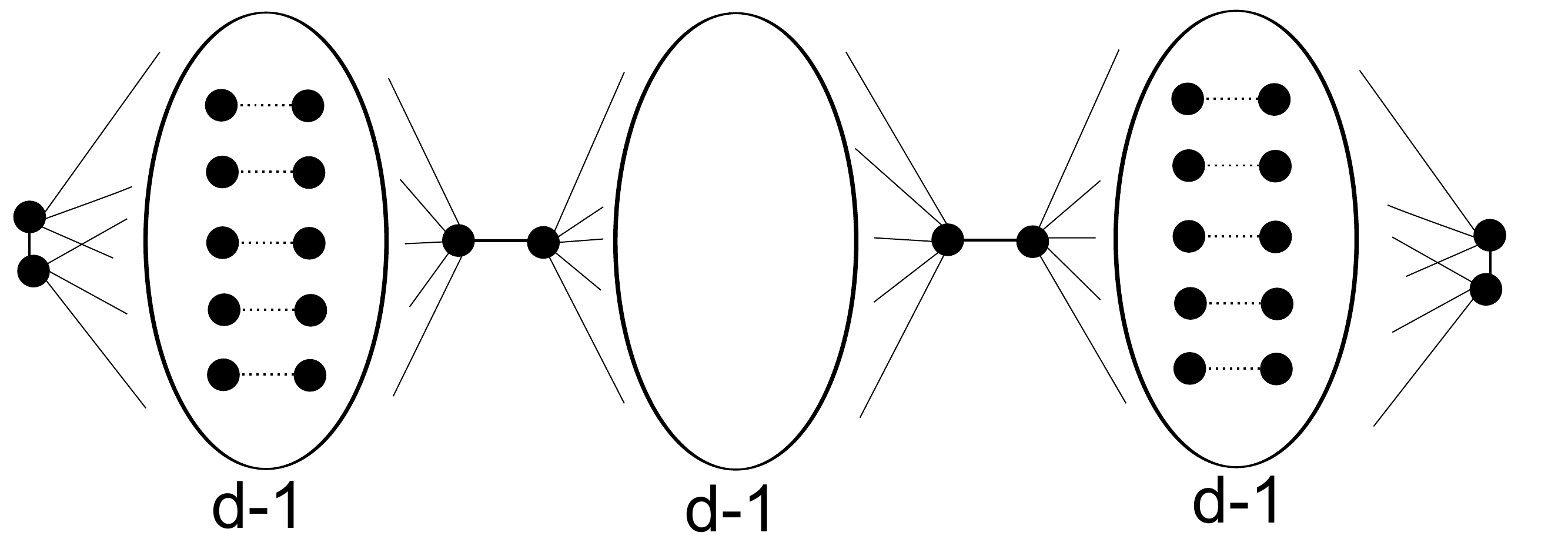}}
}
\caption{Snake graph}
\label{snake_figure}
\end{figure}

The two superregions of $G$ above are $G-R_1$ and $G-R'_1$, where $R_1$ and $R'_1$ are the two B regions.  These superregions intersect if either $T$ or $T'$ contain more than $1$ segment.

We prove an important lemma on tails.

\begin{lemma}
\label{tail_intersect}
Let $T$ and $T'$ be tails with nonempty intersection.  Then either $G$ is a snake graph or one of $T$ and $T'$ is contained in the other.
\end{lemma}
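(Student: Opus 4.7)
The plan hinges on the observation that each segment of a tail is a region of $G$. The B region $R_1$ is a region by definition, and for $i \geq 2$ the $d-1$ middle vertices of $R_i$ (each with $\deg_2 = 2$) form a single $\sim$-equivalence class whose neighbor set is exactly $R_i$. By Lemma~\ref{generalregionlemma}(5), two segments $R_i \subseteq T$ and $R'_j \subseteq T'$ are either equal or vertex-disjoint, so $T \cap T'$ decomposes as a disjoint union of shared segments. Indexing these by $I \subseteq [1,k]$ and $J \subseteq [1,k']$, let $\sigma\colon I \to J$ be the bijection with $R_i = R'_{\sigma(i)}$; the problem reduces to controlling $\sigma$.

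I would establish monotonicity of $\sigma$ by a linking-edge propagation. Writing $\tilde{v}_j, \tilde{v}_j'$ for the two special vertices of $R'_j$ in $T'$, the identification $R_i = R'_{\sigma(i)}$ (for $i,j \geq 2$) matches $v_i'$ with exactly one of $\tilde{v}_{\sigma(i)}$ or $\tilde{v}_{\sigma(i)}'$, giving a ``same'' or ``opposite'' orientation at $R_i$. The unique external neighbor of $v_i'$ in $T$ is $v_{i+1}$, while its unique external neighbor in $T'$ is $\tilde{v}_{\sigma(i)+1}'$ (same) or $\tilde{v}_{\sigma(i)-1}'$ (opposite). Equating them, $v_{i+1}$ lies in $R'_{\sigma(i) \pm 1}$, so $R_{i+1}$ shares a vertex with that segment and hence coincides with it. Propagation halts only at $i = k$ or $\sigma(i) \in \{1, k'\}$, and consistency of sign forces $\sigma$ to be strictly monotone with contiguous domain and image.

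In the \emph{same-orientation} case, leftward propagation cannot halt at $\sigma(a) > 1$ without forcing $R_{a-1}$ to be shared, so $\sigma(a) = 1$; then $R_a = R'_1$ is a B region, forcing $a = 1$ and $R_1 = R'_1$. Iterating, $R_i = R'_i$ throughout the shared interval, and the longer tail contains the other: $T \subseteq T'$ or $T' \subseteq T$. In the \emph{opposite-orientation} case the same analysis shows propagation cannot reach a B region (a B region cannot equal a non-B segment), so the shared interval must be anchored at the right ends of both tails: $b = k$ and $d = k'$, with $a, c \geq 2$. The linking-edge identities then give $u_T = \tilde{v}_{c-1}' \in V(T')$ and $u_{T'} = v_{a-1}' \in V(T)$. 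Since $u_T$ is the unique vertex outside $T$ adjacent to $T$ and $u_T \in T'$ (symmetrically for $u_{T'}$), any vertex $x \notin V(T) \cup V(T')$ has no neighbor in $V(T) \cup V(T')$, and connectedness of $G$ forces $V(G) = V(T) \cup V(T')$.

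To conclude the opposite-orientation case, I set $S := T$ and $S' := R'_1 \cup \cdots \cup R'_{c-1}$, the unshared prefix of $T'$. Each is a tail rooted at the corresponding B region; they are vertex-disjoint (no shared segment remains in $S'$), together $V(S) \cup V(S') = V(G)$, and the unique edge joining them is $w_T u_T = v_k' \tilde{v}_{c-1}' = u_{S'} u_S$. Hence $G = S \cup S' \cup \{u_S u_{S'}\}$, matching the snake-graph definition. The main obstacle is the propagation step: one must carefully identify the external-neighbor matches across both tails in each orientation and verify that propagation halts only at the described boundary conditions, using uniqueness of the single external neighbor of each special vertex in a $d$-regular graph.
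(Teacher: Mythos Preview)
Your argument is correct and rests on the same two pillars as the paper's proof: each tail segment is a region (so by Lemma~\ref{generalregionlemma}(5) segments from $T$ and $T'$ either coincide or are disjoint), and propagation along the unique external neighbor of each special vertex forces the shared segments to line up. The paper organizes the case split as $R_1=R'_1$ versus $R_1\neq R'_1$ rather than by orientation; this makes the second case shorter, since instead of tracking monotonicity of $\sigma$ and anchoring the shared interval at both right ends, the paper simply selects the shared segment $R_{k'}=R'_{j'}$ with $k'+j'$ minimal and observes that its two special vertices must link to the disjoint prefixes $R_1\cup\cdots\cup R_{k'-1}$ and $R'_1\cup\cdots\cup R'_{j'-1}$, already producing a $d$-regular (hence complete) subgraph. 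One cosmetic point: you reuse the symbol $d$ for the right endpoint of $J$, which clashes with the degree $d$ fixed throughout the paper.
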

\begin{proof}
Let the segments of $T$ and $T'$ be $R_1, R_2, \ldots, R_k$ and $R'_1, \ldots, R'_j$ respectively, and without loss of generality, $k \geq j$.  Also suppose that $R_1$ and $R_1'$ are B regions, and that $R_i$ and $R_{i+1}$ are joined by an edge for $1 \leq i \leq k-1$, as are $R'_i$ and $R'_{i+1}$ for $1 \leq i \leq j-1$.  Since a tail is a union of regions, $T \cap T'$ is also a union of regions.

First suppose that $R_1 = R_1'$, and we show by induction that $T' \subset T$.  Let $v$ be the vertex of $R_1$ that is adjacent to a vertex $x$ in $R_2$.  Since $v$ is the only vertex of $R_1$ with degree less than $d$ in $R_1$, then $v$ is also adjacent to a vertex $y$ in $R'_2$.  Also, $v$ has only one neighbor outside of $R_1$, and so $x=y$.  Then $R_2$ and $R'_2$ intersect, and thus they must be equal.

Now suppose that $R_{i-1} = R'_{i-1}$, and let $v$ and $v'$ be the two vertices in $R_{i-1}$ that do not have degree $d$ in $R_{i-1}$.  Suppose that $v'$ is the vertex in $R_{i-1}$ that is adjacent to a vertex in $R_{i-2}$.  Then $v$ is adjacent to a vertex $x$ in $R_i$.  One of $v$ and $v'$ is adjacent to a vertex $y$ in $R'_{i}$, and since $v'$ is adjacent to a vertex in $R'_{i-2} = R_{i-2}$, then $v$ is adjacent to $y$.  Since $v$ is adjacent to only one vertex outside of $R_{i-1}$, then $x=y$.  Then since $R_i$ and $R'_i$ intersect, they are equal.  Thus if $R_1 = R'_1$, then $T' \subset T$.

Now consider the case that $R_1 \neq R'_1$.  Choose $k'$ and $j'$ so that $R_{k'} = R'_{j'}$ and the sum $j'+k'$ is minimized.  Let $T_1$ be $R_1 \cup \ldots \cup R_{k'-1}$, and let $T_2$ be $R'_1 \cup \ldots \cup R'_{j'-1}$.  Let $v$ and $v'$ be the two vertices of $R_{k'} = R'_{j'}$ that are adjacent to vertices outside of $R_{k'}$.  Since $T_1$ and $T_2$ are disjoint, $w_{T_1} \neq w_{T_2}$.  Then either $v$ or $v'$ is adjacent to $w_{T_1}$; assume without loss of generality an edge $vw_{T_1}$.  Either $vw_{T_2}$ or $v'w_{T_2}$ is an edge, and since $v$ is adjacent to only one vertex outside of $R_k$, there is an edge $v'w_{T_2}$.  There can be no other vertices or edges in $G$, as the above establishes a $d$-regular graph.  Thus $G$ is a snake graph.
\end{proof}

\subsection{Multitails}

Let $\{T_1, \ldots, T_m\}$ be a maximal collection of tails such that $u_{T_1} = \ldots = u_{T_m}$.  If $m \geq 2$, then $R = T_1 \cup \cdots \cup T_m$ is an $m$-\textit{multitail}, and multitails are superregions.  Let $S_R$, each $S_{Ri}$ for $1 \leq i \leq 4$, $R \cap \mathcal{N}$, and $R \cap \mathcal{W}$ be the union of the corresponding sets over the $T_j$.  Then $|S_R| = 4|R \cap \mathcal{V}|$.

\subsection{A tails}

We define our next superregion $R$, an \textit{A tail}, as follows.  See Figure \ref{atail_figure} for an illustration.

\begin{definition}
Let $T$ be a tail with $k$ segments.  Let $H'$ be a subgraph of $G$ consisting of the following vertices: 
\begin{list}{$\bullet$}{}
\item $u_T$, 
\item a vertex $z$, 
\item a set $X$ with $|X| = d-2$, 
\item a subset $X' \subset X$ of even cardinailty less than $d-3$, and 
\item vertices $y_1$ and $y_2$.
\end{list}
Let the edge set of $H'$ be as follows: 
\begin{list}{$\bullet$}{}
\item $u_Tz$, 
\item $u_Tx$ for all $x \in X$, 
\item a complete subgraph on $X$ with a matching on $X'$ removed, 
\item $xy_1$ and $xy_2$ for all $x \in X$, 
\item $y_1y_2$, 
\item $zy_1$ and $zy_2$, and 
\item $xz$ for all $x \in X'$.
\end{list}
All vertices $x \in X-X'$ are low degree and are all in the same region $H$.  Then we say that $T \cup H$ is an A tail.
\end{definition}

\begin{figure}[h]	
\centerline{
\mbox{\includegraphics[width=4.70in]{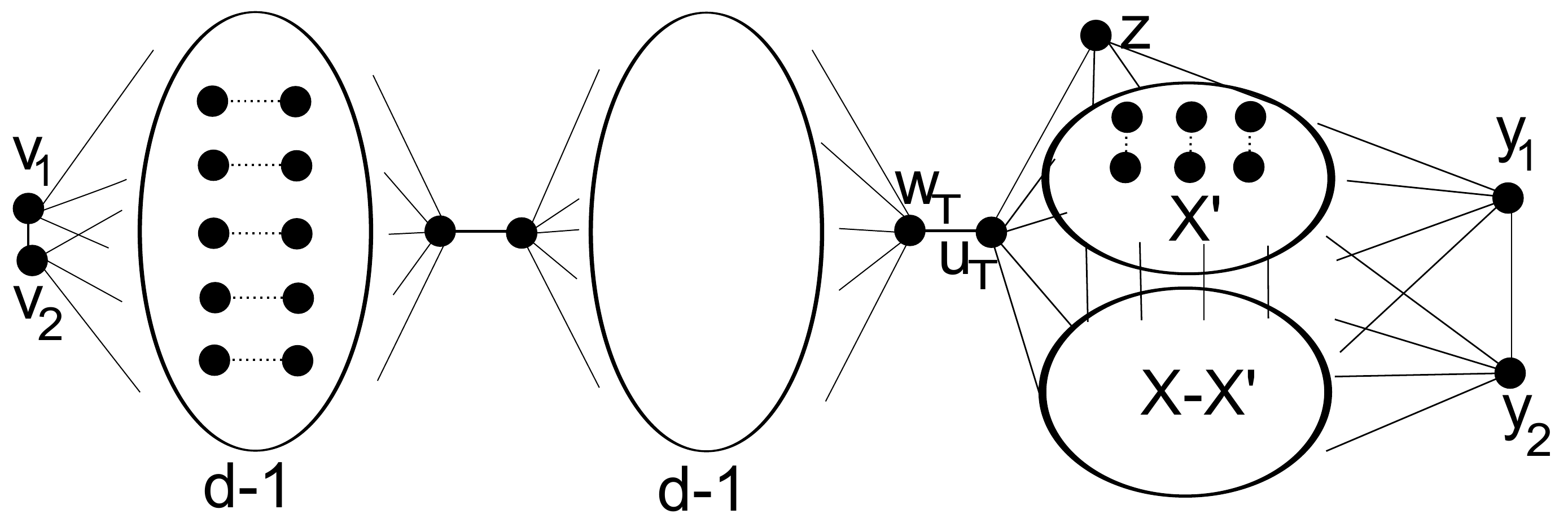}}
}
\caption{A tail}
\label{atail_figure}
\end{figure}

Either $H = H'$ or $H = H' - \{z\}$; this follows from Lemma \ref{generalregionlemma} and the observations that $N(x) = H'-\{z\}$ for $x \in X-X'$, and $N_2(x) = \{w_T,z\}$, whereas $w_T \not \in H$ since $w_T \in T$.  Let $R := T \cup H$.  We call $H$ the \textit{head region} of $R$.  Note that $G-\{z\}$ is disconnected with $R-\{z\}$ a component.  Also note that if we allow $|X'| = d-3$, then $z$ would have $d$ neighbors in $R$, and $G$ would be a snake graph.

Take $R \cap \mathcal{N}$ to be the two vertices $v_1, v_2 \in T$ that satisfy $\deg_2(v_1) = \deg(v_2) = 1$, together with $y_1$ and $y_2$.  We take $R \cap \mathcal{W} = \{u_T\}$.  Note that $z \in \mathcal{U}$.  Also, $X-X' \subset \mathcal{V}$, whereas $X'$ might or might not be a subset of $\mathcal{V}$.  Then $|R \cap \mathcal{V}| = k(d-1)+|X \cap \mathcal{V}|$.  Let $Z$ be the set of vertices adjacent to $z$ and not in $R$; $|Z| =d-|X'|-3 \geq 2$ by $|X'| < d-3$, $|X'|$ even, and $d$ odd.  Let $b_1$ and $b_2$ be two distinct vertices of $Z$.

Now we let $S_R$ be the following sets of pairs: 
\begin{list}{$\bullet$}{}
\item the $(4k-1)(d-1)$ elements of $S_{T1} \cup S_{T2} \cup S_{T3}$ (these are in $S_{R1}$);
\item the $2$ pairs $(u_T,y_1)$ and $(u_T,y_2)$ (these are in $S_{R1}$);
\item the $|X-X'|-1$ pairs $(u_T,s)$ for all $s \in Z$ (these are in $S_{R4}$);
\item the $|X'|$ pairs $(x,x')$ for $x, x' \in X'$ (these are in $S_{R1}$);
\item the $2|X-X'|$ pairs $(x,w_T)$, $(w_T,x)$ for $x \in X-X'$ (these are in $S_{R1}$);
\item the $|X-X'|$ pairs $(x,z)$ for $x \in X-X'$ (these are in $S_{R1}$ if $z \in R$ and otherwise in $S_{R2}$);
\item the $|X-X'|$ pairs $(z,x)$ for $x \in X-X'$ (these are in $S_{R1}$ if $z \in R$ and otherwise in $S_{R3}$);
\item the $2|X' \cap \mathcal{V}|$ pairs $(x,w_T)$, $(w_T,x)$ for $x \in X' \cap \mathcal{V}$ (if $|X \cap \mathcal{V}| > 0$, these are in $S_{R1}$).
\item the $2|X' \cap \mathcal{V}|$ pairs $(x,b_1)$, $(x,b_2)$ for $x \in X' \cap \mathcal{V}$ (if $|X \cap \mathcal{V}| > 0$, these are in $S_{R2}$).
\end{list}
Then $|S_R| = 4|\mathcal{V} \cap R|$.

\subsection{B tails}

Our final superregion is a \textit{B tail}, defined as follows.  See Figure \ref{btail_figure} for an illustration.

\begin{figure}[h]	
\centerline{
\mbox{\includegraphics[width=4.50in]{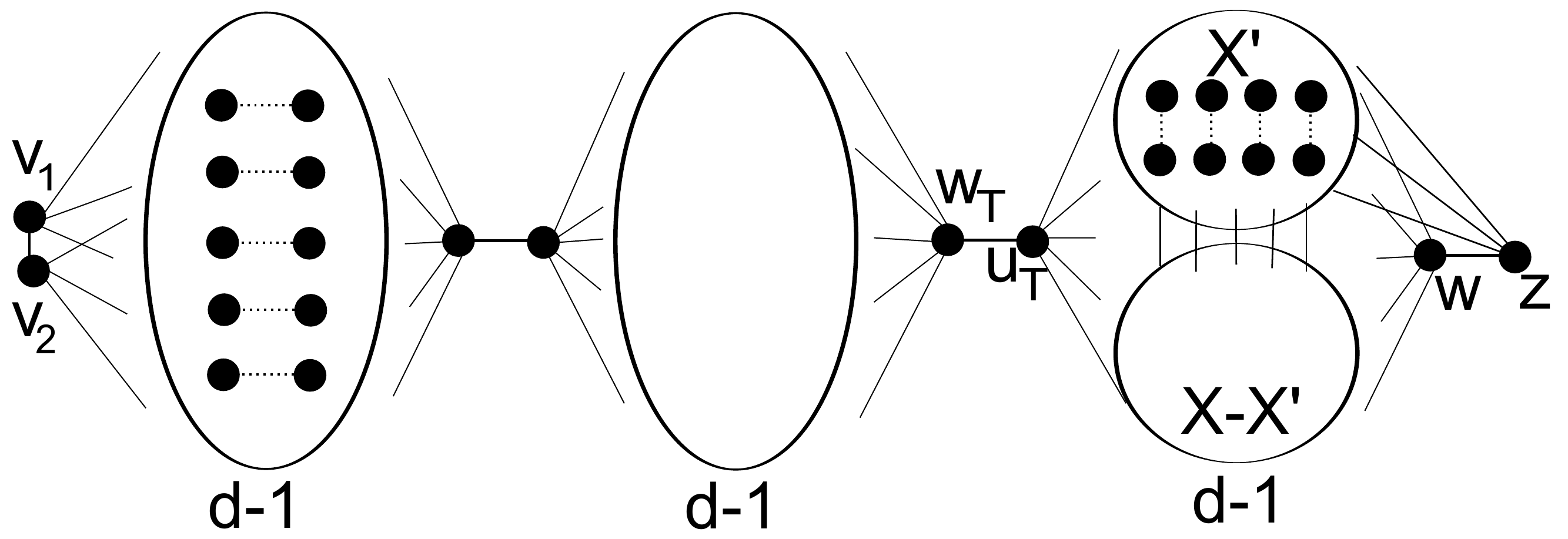}}
}
\caption{B tail}
\label{btail_figure}
\end{figure}

\begin{definition}
Let $T$ be a tail with $k$ segments, and let $H'$ be a subgraph of $G$ with the following vertices:
\begin{list}{$\bullet$}{}
\item $X$ with $|X| = d-1$; 
\item $X' \subset X$ such that $|X'|$ is even and not equal to $0$ or $d-1$; 
\item a vertex $w$; 
\item a vertex $z$; 
\item and $u_T$.
\end{list}
Suppose that $H'$ consists of the following edges: 
\begin{list}{$\bullet$}{}
\item a complete graph on $X$ with a matching on $X'$ removed;
\item all edges $u_Tx$ for $x \in X$;
\item all edges $wx$ for $x \in X$;
\item all edges $zx$ for $x \in X'$;
\item and $wz$.
\end{list}
Each $x \in X-X'$ is low degree and is contained in a common region $H$.  Then $T \cup H$ is a B tail.
\end{definition}

For $x \in X-X'$, $N_2(x) = N'_2(x) = \{z, w_T\}$.  Since $w_T \in T$, $w_T \not \in T$.  Also, since $d$ is odd, $|X'| \leq d-3$ and $z$ has $|X-X'| \geq 2$ neighbors outside of $R$.  Therefore, no vertex of $X'$ is low degree, and $w$ is low degree if and only if $|X'| = d-3$.  We conclude that $H = H'$ exactly when $|X'| = d-3$, and otherwise $H = H'-\{z\}$.  We say that $H$ is the \textit{head region} of $R$.  Note that $G-\{z\}$ is disconnected with $R-\{z\}$ a component.  Also note that if we allow $|X'| = d-1$, $R$ would be a snake graph, whereas if $|X'| = 0$, then $R$ would be an ordinary tail.

We take $R \cap \mathcal{W} = \{w\}$ and $R \cap \mathcal{N}$ to be the two vertices $v_1$ and $v_2$ of $T$ such that $\deg_2(v_1) = \deg_2(v_2) = 1$.  Also, $X-X' \subset \mathcal{V}$, whereas $X'$ might or might not be a subset of $\mathcal{V}$.  Note that $R \cap \mathcal{V} = k(d-1) + |X \cap \mathcal{V}|$.  Also, $z$ has $|X-X'|$ neighbors outside of $R$, and since $|X-X'| \geq 2$, consider distinct $b_1, b_2 \in N(z) - R$.  Then we define $S_R$ as the union of the following sets:
\begin{list}{$\bullet$}{}
\item the $(4k-1)(d-1)$ elements of $S_{T1} \cup S_{T2} \cup S_{T3}$ (these are in $S_{R1}$);
\item the $|X-X'|$ pairs $(w,s)$ for $s \in N(z) - \{w\}$ (these are in $S_{R4}$);
\item the $|X'|$ pairs $(x,x')$ for $x, x' \in X'$ and $xx'$ not an edge (these are in $S_{R1}$);
\item the $2|X|$ pairs $(x,w_T), (w_T,x)$ for $x \in X$ (these are in $S_{R1}$);
\item the $|X-X'|$ pairs $(x,z)$ for each $x \in X-X'$ (these are in $S_{R1}$ if $z \in H$ and otherwise in $S_{R2}$);
\item the $|X-X'|$ pairs $(z,x)$ for each $x \in X-X'$ (these are in $S_{R1}$ if $z \in H$ and otherwise in $S_{R3}$);
\item the $2|X' \cap \mathcal{V}|$ pairs $(x,b_1)$, $(x,b_2)$ for $x \in X' \cap \mathcal{V}$ (these are in$S_{R2}$).
\end{list}
Then $|S_R| \geq 4k(d-1)+4|X-X'|+4|X' \cap \mathcal{V}| = 4|\mathcal{V} \cap R|$.

\subsection{Identifying A and B tails}
In this section we prove an important lemma on the structure of A and B tails.

\begin{lemma}
\label{utlemma}
Let $H$ be a region of $G$, and suppose that $H$ is the head region of either an A or B tail.  Then exactly one vertex of $H$ is in $U := \{u_T: T$ is a tail$\}$.
\end{lemma}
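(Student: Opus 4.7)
My plan is to prove existence and uniqueness of the vertex in $H \cap U$.

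For existence, the A or B tail construction explicitly places $u_T$ in $H'$, hence in $H$ (since $H \in \{H', H' \setminus \{z\}\}$ and $u_T \neq z$), and $u_T \in U$ by the very definition of $u_T$ for the main tail $T$. Thus $|H \cap U| \geq 1$.

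For uniqueness, I would suppose $h \in H$, $h \neq u_T$, with $h = u_{T'}$ for some tail $T'$, and let $w = w_{T'}$ with $R_{k'}'$ the last segment of $T'$; recall $R_{k'}'$ is a $(d+1)$-clique minus one edge (with $w$ and its matching partner $p$ at the endpoints of the removed edge) or a B region if $k'=1$. Apply Lemma \ref{tail_intersect} to $T$ and $T'$: since $G$ is not a snake, $T$ and $T'$ are nested or disjoint. The case $T' \subseteq T$ forces $w_{T'} \in T$, so $h$ is adjacent to a vertex of $T$; but the only $H$-to-$T$ edge is $u_T w_T$, so $h = u_T$, contradicting $h \neq u_T$. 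The case $T \subseteq T'$ requires $T'$ to extend $T$ with a new clique-minus-edge segment containing $u_T$, and examining the non-edges $u_T y_1$, $u_T y_2$ in an A tail (or the analogous B tail non-edges) shows no such extension exists.

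Hence $T \cap T' = \emptyset$. Any tail begins with a B region whose characteristic vertex has $\deg_2 = 1$, but a direct check confirms every vertex of $H$ has $\deg_2 \geq 2$ (the low-degree vertices $X \setminus X'$ have $\deg_2 = 2$ by construction, and every other vertex of $H$ is not low-degree at all). So the B region $R_1'$ of $T'$ is not contained in $H$. Combined with the cut-vertex property that $G \setminus \{z\}$ is disconnected with $R \setminus \{z\}$ a component, which entails that the only vertex of $H$ with an edge to $G \setminus R$ is $z$, the remaining possibility is $h = z$ and $w \in Z$.

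Ruling out this last configuration is the main obstacle. The matching partner $p$ of $w$ in $R_{k'}'$ must be adjacent to all $d-1$ non-$w$ vertices of $R_{k'}'$, all of which lie in $G \setminus R$; the cut-vertex property then forces $p \in G \setminus R$ (since $p \in R$ would force $p = z$, contradicting that $p$ is not adjacent to $w$). To close the argument, I would trace through the segments of $T'$, particularly the first segment $R_1'$, and exploit the precise local structure at $z$ in the A/B tail: either $z$ being adjacent to the $\deg_2 = 1$ vertex of $R_1'$ (forcing the heavy restriction that $N(z)$ is essentially confined to $N(v_B) \cup \{v_B, u_B\}$, which conflicts with the presence of $y_1, y_2$ in the A tail or $w$ in the B tail as neighbors of $z$), or $z$ being identified with $u_B$ itself, which forces $u_{T'} = w_{T'}$ (impossible). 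The delicacy lies in the fact that $G \setminus R$ is not directly constrained by the head-region construction, so the contradiction must be extracted from the interplay of $z$'s two neighborhoods across the cut.
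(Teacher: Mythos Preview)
Your approach is genuinely different from the paper's, and the difference is instructive. You attack uniqueness by invoking Lemma~\ref{tail_intersect} to relate $T$ and an arbitrary tail $T'$ with $u_{T'}\in H$, then push through a case split on $T'\subseteq T$, $T\subseteq T'$, or $T\cap T'=\emptyset$, finishing with the cut-vertex structure at $z$. The paper never touches Lemma~\ref{tail_intersect} here. Instead it gives an \emph{intrinsic} characterization of $u_T$ inside $H$: it defines a \emph{link vertex} of $H$ to be a vertex $a\in H$ with exactly one neighbour $v$ outside $H$ such that $v$ has exactly one neighbour in $H$, and then shows (by a short inspection of which vertices of $H$ have outside neighbours, splitting only on whether $z\in H$) that either $u_T$ is the unique link vertex of $H$, or $H$ has no link vertex and $u_T$ is the unique vertex of $H$ with more than one neighbour outside $H$. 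Either way $u_T$ is pinned down by $H$ alone, which is exactly what the application in Section~\ref{part_sect} needs.

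Your route has real gaps as written. First, in the disjoint case you only rule out that the B~region of $T'$ sits inside $H$; you never exclude a non-initial segment $K_{d+1}$ minus an edge from coinciding with $H$, which is needed for $T'\cap H=\emptyset$. Second, your claim that ``the only vertex of $H$ with an edge to $G\setminus R$ is $z$'' presumes $z\in H$; when $z\notin H$ the vertices of $N(z)\cap H$ all have an edge to $G\setminus R$ (namely to $z$), and you do not treat this. Third, and most seriously, the case $h=z$ with $z\in H$ is only sketched: you acknowledge it as ``the main obstacle'' and outline a plan involving the B~region of $T'$ and the neighbours $y_1,y_2$ (or $w$) of $z$, but nothing here is actually proved. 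There is no a priori reason a tail $T'$ cannot live entirely in $G\setminus R$ with $u_{T'}=z$, so a genuine argument is required, and your outline does not supply one. The paper's link-vertex characterization sidesteps all of this: it never needs to reason about what $G\setminus R$ looks like.
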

\begin{proof}
Let $R$ be an A or B tail (possibly not unique) such that $H$ is the head of $R$.  Let $u_T$ and $z$ be as in the definition of $R$.  We show that $u_T$ is uniquely determined by $H$.  Define a \textit{link vertex} to be a vertex $a \in H$ such that $a$ is adjacent to exactly one vertex $v$ outside of $H$, and $v$ is adjacent to exactly one vertex in $H$, namely $a$.

First, suppose that $H$ contains a link vertex.  We show that $u_T$ is the only link vertex in $H$.  First suppose that $z \in H$.  Then $z$ and $u_T$ are the only vertices in $H$ that are adjacent to vertices outside of $H$.  But $z$ cannot be a link vertex, since $z$ has most $d-2$ neighbors in $H$ and thus at least $2$ outside of $H$.  Thus $u_T$ is the only link vertex.  Now, if $z \not \in H$, then the following vertices in $H$ are adjacent to vertices outside of $H$: $u_T$ and $N(z) \cap H$.  By construction, $|N(z) \cap H| \geq 2$, and thus no vertex in $N(z)$ is a link vertex and $u_T$ is the only link vertex.  Thus $u_T$ is uniquely determined when $H$ contains a link vertex.

Now suppose that $H$ does not contain a link vertex.  Then $H$ is the head region of an A tail and $z \not \in H$.  By construction, $u_T$ is the only vertex in $H$ that is adjacent to more than one vertex outside of $H$: namely $z$ and $w_T$.  Thus $u_T$ is uniquely determined in the case that $H$ does not contain a link vertex.
\end{proof}

\section{Superregions as a partition}
\label{part_sect}

In this section, we show that the superregions of $G$ partition $V(G)$ unless $G$ is a snake graph.

\begin{theorem}
Suppose that $G$ is not a snake graph.  Then $V(G)$ is partitioned by the superregions of $G$.
\end{theorem}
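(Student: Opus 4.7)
The plan is to separate the claim into coverage (every vertex lies in some superregion) and disjointness (no vertex lies in two). Coverage is essentially built into the construction: every vertex either lies in some region $R$, in which case the classification rules assign $R$ to exactly one superregion (standalone A, C, D, E, F, or G region; plain tail; component of a multitail; or head region of an A or B tail), or lies in no region, and is then declared a single-vertex superregion. The real work is disjointness.

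Distinct regions are already pairwise disjoint by Lemma \ref{generalregionlemma}(5), and single-vertex superregions correspond to vertices outside every region, so the only remaining dangers are that a region is absorbed into two distinct compound superregions, or that non-region vertices such as $u_T$ or $z$ appearing inside a compound superregion are claimed by two superregions. For a B region $R_1$, Lemma \ref{tail_intersect} combined with the non-snake-graph hypothesis forces the tails containing $R_1$ to be linearly ordered by inclusion, so $R_1$ lies in a unique maximal tail $T_{\max}$; the segment-type D regions of $T_{\max}$ are forced into the same $T_{\max}$ by the same argument, because any two tails sharing a segment again fall under Lemma \ref{tail_intersect}. This reduces the problem to showing that each maximal tail is absorbed into a unique compound superregion, and that the head regions of A and B tails are unambiguous.

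Write $u := u_{T_{\max}}$. A short degree count at $u$ renders the three scenarios ``$u$ is common to $m \geq 2$ maximal tails'', ``$u$ attaches to an A tail head region'', and ``$u$ attaches to a B tail head region'' mutually exclusive: an A or B tail head already uses $d-1$ of the $d$ neighbors of $u$ and therefore leaves room for only one attached tail, while a multitail uses at least two of $u$'s neighbors on tail sides and so leaves at most $d-2$ free, one fewer than the $d-1$ needed to anchor an A or B tail head. If none of the three scenarios holds, $T_{\max}$ is its own superregion; otherwise $T_{\max}$ is absorbed into a unique compound superregion, and when that compound is an A or B tail with head region $H$, Lemma \ref{utlemma} pins down $u$ from $H$ alone, so $H$ cannot simultaneously be the head region of two distinct A or B tails. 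The main obstacle I anticipate is the careful verification that no region absorbed into a compound superregion, be it a tail segment or an A or B tail head region, simultaneously qualifies as a standalone A, C, D, E, F, or G superregion; this reduces to matching the structural description of each absorbed region against the basic region classification and invoking Lemmas \ref{tail_intersect} and \ref{utlemma} to rule out ambiguous assignments, while the status of the non-region vertices $u_T$ and $z$ (whether inside a head region or a single-vertex superregion) is fixed by the same degree analysis and so carries over for free.
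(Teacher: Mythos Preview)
Your proposal is correct and follows essentially the same approach as the paper: both arguments reduce coverage to the fallback definitions and reduce disjointness to Lemma~\ref{generalregionlemma}(5) for single regions, Lemma~\ref{tail_intersect} for overlapping tails, the degree count at $u_T$ (which has $d-1$ neighbors in a head region and hence only one tail-side neighbor) to separate A/B tails from multitails, and Lemma~\ref{utlemma} to pin down the tail attached to a given head region. The paper organizes the case analysis by unordered pairs of superregion types, while you organize it by asking which region could be double-covered and tracing it to its unique maximal tail; the content is the same. Two minor clarifications: your ``anticipated obstacle'' that an absorbed region might simultaneously qualify as a standalone superregion is a non-issue, since by definition a region is declared a superregion only when it is not contained in any other superregion; and there are no ``non-region vertices $u_T$ or $z$ inside a compound superregion,'' since every compound superregion is a union of regions (the paper notes this explicitly), so that worry can be dropped.
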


\begin{proof}
First, every vertex is contained in a superregion, since singleton sets are superregions if not contained in any larger superregion.  We need only to show that $R \cap R' = \emptyset$ if $R$ and $R'$ are distinct superregions.  If either $R$ or $R'$, say $R$, is a single vertex, then either $R \subset R'$ or $R \cap R' = \emptyset$.  The former is impossible by definition.  Now we assume that $R$ and $R'$ each consist of multiple vertices.

By construction, all superregions that consist of more than one vertex are unions of regions.  Therefore, if $R$ is both a region and a superregion, then either $R \subset R'$ or $R \cap R' = \emptyset$.  Again, the former is impossible by definition, and now we assume that $R$ and $R'$ each consist of multiple regions.  By construction, each of $R$ and $R'$ contain a tail.

Next, suppose that $R$ and $R'$ are both either a tail or a multitail, and suppose that $R \cap R' \neq \emptyset$.  Write $R = T_1 \cup \ldots \cup T_k$ and $R' = T'_1 \cup \ldots \cup T'_j$ as unions of tails.  By Lemma \ref{tail_intersect}, $R \cap R'$ is a collection of proper tails, and let $T$ be a proper tail in $R \cap R'$.  By $T \subset R$, for all proper tails $T' \in R$, $u_T = u_{T'}$.  Likewise, by $T \subset R'$, for all proper tails $T' \in R'$, $u_T = u_{T'}$.  Thus $R \cup R' = R = R'$.

Now let $R$ be an A or B tail with proper tail $T$, and let $R'$ be either a tail or multitail.  Write $R' = T'_1 \cup \cdots \cup T'_k$ as a union of disjoint tails.  The head region of an A or B tail is not isomorphic to any segment of a tail.  Thus, if $R \cap R' \neq \emptyset$, then $H \cap R' = \emptyset$ and $T \cap R' \neq \emptyset$.  By Lemma \ref{tail_intersect}, $T \subset R'$.  Let $H$ be the head region of $R$.  Then $u_T \in H$.  By construction of an A or B tail, $u_T$ has $d-1$ neighbors in $H$, and thus only $1$ neighbor in $R'$, which implies that $k=1$.  Then by Lemma \ref{tail_intersect}, $T = R'$ and $R'$ is not a superregion, a contradiction.

Finally, if $R$ and $R'$ are both A or B tails such that $R \cap R' \neq \emptyset$, then we show that $R=R'$.  Let $T$ and $T'$ be the respective tails of $R$ and $R'$, and let $H$ and $H'$ be the respective head regions.  By Lemma \ref{tail_intersect}, if $T \cap T' \neq \emptyset$, then $T=T'$.  Since only one region outside of $T$ may be adjacent to $T$, then $H = H'$ and thus $R = R'$.  Next, we have that $H \cap T' = H' \cap T = \emptyset$ by the fact that $H$ and $H'$ are not isomorphic to any segment of a tail.  Finally, suppose that $H = H'$.  By Lemma \ref{utlemma}, $u_T = u_{T'}$.  Neither $T$ nor $T'$ is part of a multitail since multitails have already been shown not to intersect A or B tails, and so $T = T'$.  This implies that $R=R'$.

This establishes that superregions partition $V(G)$.
\end{proof}

\section{Exceptions}
\label{exception_sect}
There are two families of graphs that are exceptions to Theorem \ref{main_theorem}.  In this section we discuss these exceptions in more detail.

\subsection{Snake graphs}
A snake graph $G$ consisting of $k \geq 2$ regions, as described above, has $n = k(d+1)+2$ vertices.  Since a snake graph contains a B region, a snake graph exists only if $d$ is odd.  A snake graph is determined, to isomorphism, by $d$ and $k$.

By construction, we may calculate that $$\sum_{v \in V(G)} \deg_2(v) = (4k-2)(d-1)+8 = \frac{(4k-2)(d-1)+8}{ k(d+1)+2}n.$$  For large $d$, this quantity is approximately $(4-2/k)n$.

\subsection{Peanut graphs}

A \textit{peanut graph} $G$ is defined as follows.  Partition $V(G)$ into sets $R_1$ and $R_2$ with $d+1$ and $d+2$ vertices respectively.  The only edge in the complement of $G[R_1]$ is $w_1w_2$.  The only edges in the complement of $G[R_2]$ are $uv_1, uv_2, uv_3$ and a matching on the remaining vertices.  The only edges between $R_1$ and $R_2$ are $uw_1$ and $uw_2$.  Note that $R_1$ and $R_2$ are both A regions.  A peanut graph exists only when $d$ is even, due to the matching in the complement of $R_2$, and is determined up to isomorphism by $d$.  See Figure \ref{peanut_figure} for an illustration.

\begin{figure}[h]	
\centerline{
\mbox{\includegraphics[width=3.00in]{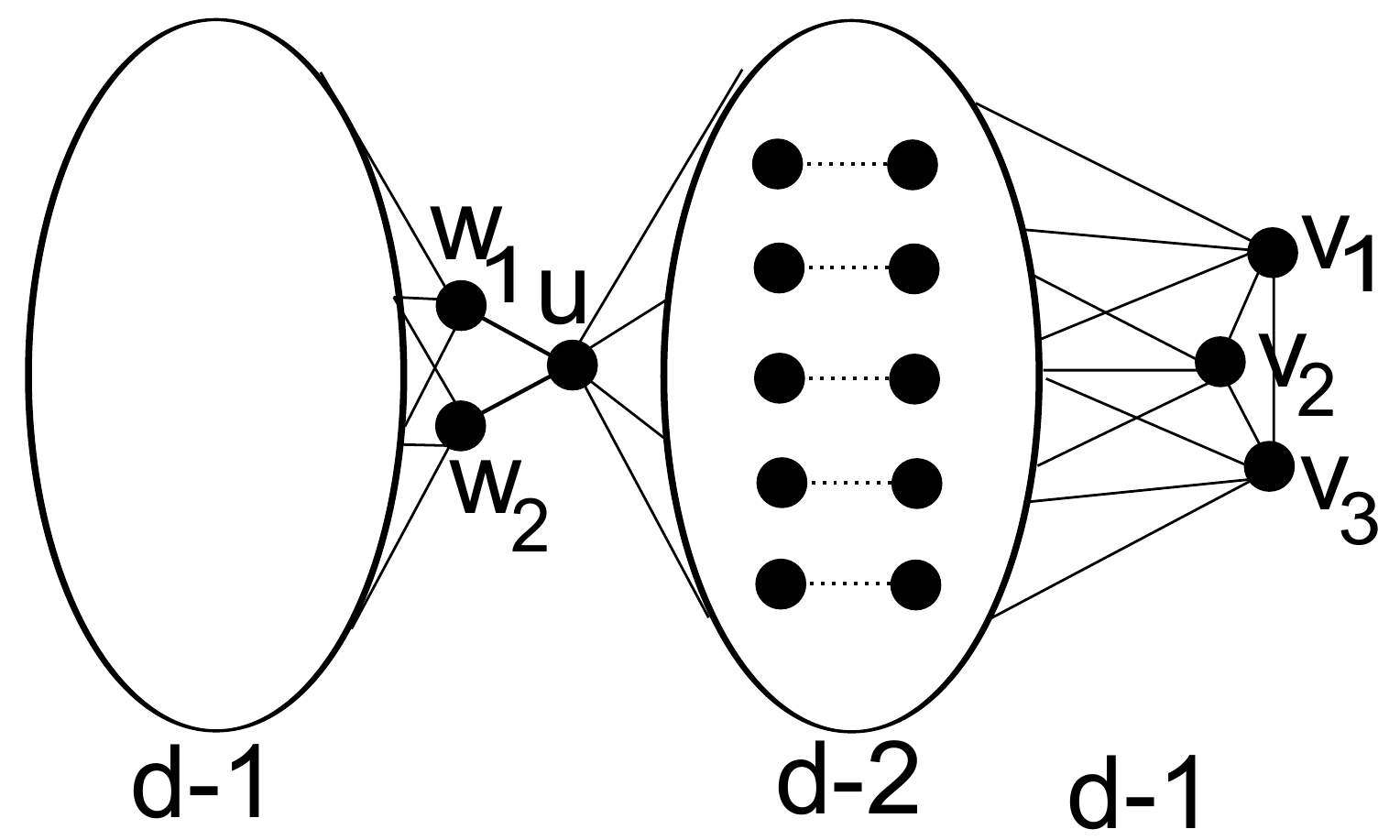}}
}
\caption{Peanut graph}
\label{peanut_figure}
\end{figure}

One may check that $n = 2d+3$ and $$\sum_{v \in V(G)} \deg_2(v) = 7d-4 = \frac{7d-4}{2d+3}n.$$  For large $d$, this quantity is approximately $(7/2)n$.

\section{Proof of Theorem \ref{main_theorem}}
\label{proof_sect}

In this section, we conclude the proof of Theorem \ref{main_theorem}.  We need to show that the $S_R$ are disjoint over all superregions $R$.  By Lemma \ref{duppairs}, we need only to consider $(x,y) \in S_{R4}$ for a superregion $R$ and show that, for each superregion $R'$, that $(x,y) \not \in S_{R'3}$, assuming that the elements of $S_{R'}$ are properly chosen.  Now suppose that $(x,y) \in S_{R'3}$.  Note that $x \in R \cap \mathcal{W}$, and $y$ is a low degree vertex of $R'$ with $x \in N_2'(y)$.  We consider several cases on $R$.

\subsection{$R$ is a single vertex, or a D, E, F, or G region}

This case is trivial, since by construction, $\mathcal{W} \cap R = \emptyset$ and $S_{R4} = \emptyset$.

\subsection{$R$ is an A region.}
\label{Aregionsubsection}

Let $X$ be the set of vertices $x'$ of $R$ that satisfy $\deg_2(x') = 2$.  Since $y$ is not in $R$, every vertex of $X$ is in $N_2(y)$.  Since the complement of the induced subgraph $G[X]$ is a matching, $|X|$ is even, and since $y$ is a low degree vertex, $|X| = 2$.  We must have $\deg_2(y) = 3$; otherwise, let $u$ be the one vertex adjacent to both $x$ and $y$, and let $Y := N(y) - \{u\}$ so that $|Y|=d-1$.  No vertex of $Y$ may have a neighbor outside of $Y \cup \{y,u\}$ if $\deg_2(y) = 2$, and thus $N(y') = Y -\{y'\} \cup \{u,y\}$ for $y' \in Y$.  But then $X \cup Y \subset N(u)$ is a contradiction to $\deg(u) = d$.

Let $z$ be the unique element of $N_2(y) - X$.  If $z \in N_2'(y)$, then in $S_{R'3}$, we may replace $(x,y)$ with $(z,y)$, unless, by Lemma \ref{duppairs}, $(z,y) \in S_{R''4}$ for some superregion $R''$.  Then $z \in \mathcal{W}$, and since $z$ is the only vertex in $R'' \cap N_2(y)$, $R''$ must be a tail with $z = w_{R''}$.  It must be that $u_{R''} \neq u$; otherwise, since $X \subset N(u)$, at most $d-2$ vertices of $Y = N(y) - \{u\}$ are adjacent to $u$, and so let $y' \in Y$ be not adjacent to $u$.  Then $N(y') \subset Y - \{y'\} \cup \{y\}$ by $\deg_2(y) = 3$, but then $y'$ has degree at most $d-1$, a contradiction.  Now, if $u \neq u'$, then $G[y, N(y)]$ has $d+1$ vertices, of which all have degree $d$, except that $u$ has degree $d-2$ and $u'$ has degree $d-1$.  This would imply an odd degree sum on $G[y, N(y)]$, which is impossible.

Now, consider the case that $z \in N_2(y)-N_2'(y)$.  Then $V(G) = R \cup Y \cup \{ u, y, z\}$.  Counting degrees on $G[Y \cup \{u,y,z\}]$, which has $d+2$ vertices, we see that $u$ has degree $d-2$ and all other vertices have degree $d$.  Let $a,b,c$ be the three vertices of $G[Y \cup \{u,y,z\}]$ that are not adjacent to $u$.  Then by degree considerations, the complement of $G[Y \cup \{u,y,z\}]$ contains edges $ua, ub, uc$ and a matching on all other vertices.  Thus $G$ is a peanut graph.

\subsection{$R$ is an $m$-multitail.}

First suppose that $m \geq 3$.  Then $y$ is distance $2$ from each vertex in $R \cap \mathcal{W}$, and since $y$ is a low degree vertex, $m=3$.  Let $u$ be the vertex adjacent to both $x$ and $y$, and let $Y := N(y) - \{u\}$.  Since $u$ is adjacent to $y$ and $3$ vertices in $R$, $u$ has at most $d-4$ neighbors in $Y$.  Choose $y' \in Y - N(u)$.  Since $\deg_2(y) = 3$, $N(y') \subset Y - \{y'\} \cup \{y\}$.  Then $y'$ has degree at most $d-1$, a contradiction.  We conclude that $m=2$.

With $m=2$, we may perform the same analysis as in Section \ref{Aregionsubsection} and conclude that $R'$ is the complement of a graph with $d+2$ vertices containing edges $ua, ub, uc$ and a matching on all other vertices.  This implies that $d$ is even, and the existence of a tail implies that $d$ is odd, and so $R$ cannot be a multitail.

\subsection{$R$ is a C region.}

Let $u$ be the unique vertex that is adjacent to both $x$ and $y$.  By Lemma \ref{cregionlemma}, $R$ contains a vertex $x' \neq x$ in $N(u)$, and $\{x,x'\} \subseteq N_2(y)$.  Suppose, by way of contradiction, that $\deg_2(y) = 2$.  Let $Y := N(y) - \{u\}$.  If $y' \in Y$, then $N(y') \subset Y - \{y'\} \cup \{u,y\}$, and since $\deg_2(y') = d$, then $N(y') = Y - \{y'\} \cup \{u,y\}$.  Then $Y \cup \{x,x'\} \subset N(u)$, a contradiction to $\deg_2(u) = d$.  We conclude that $\deg_2(y) = 3$.

Note that $x' \not \in \mathcal{W}$.  By Lemma \ref{lemma31} and the fact that $\deg_2(y) = 3$, if $(\tilde{x},y) \in S_{R
3}$, then $\tilde{x} = x$.  Then in $S_{R'3}$, we may replace $(x,y)$ by $(x',y)$, and since $x' \not \in \mathcal{W}$, $(x,y) \not \in S_{R''4}$ for any superregion $R''$.

\subsection{$R$ is an A tail or a B tail.}

Let $u$ be the unique vertex that is adjacent to both $x$ and $y$.  By construction, there are at least three vertices $x,x', x''$ in $R \cap N_2(y)$.  Let $Y := N(y) - \{u\}$.  Since $y$ is low degree, for each $y' \in Y$, $N(y') \subset Y - \{y'\} \cup \{u,y\}$.  Since $\deg(y') = d$, $N(y') = Y - \{y'\} \cup \{u,y\}$.  Then $Y \cup \{x,x',x''\} \subset N(u)$, a contradiction to $\deg_2(u) = d$.

\subsection{$R$ is a tail.}

For the final case, that $R$ is a tail, we consider several cases on $R'$.  Let $u$ be the one vertex that is adjacent to both $x$ and $y$.

\subsubsection{$R'$ is an A or B region}

If $R'$ is an A region, then let $Y := N(u) \cap R'$.  By definition of an A region, $|Y| < d-1$.  Also $|Y|$ must be even, and $d$ must be odd by the existence of a tail $R$, and so $|Y| \leq d-3$.  Then $N(u)$ consists of at least $3$ vertices outside of $R'$.  Also, $N(u)-R' \subset N_2(y)$, and $N_2(y)$ consists of at least one vertex in $R'$, contradicting the definition of a low degree vertex.  So $R'$ is not an A region.  If $R'$ is a $B$ region, then $G$ is a snake graph.

\subsubsection{$\deg_2(y) = 2$ and $|N_2'(y)| = 1$}

Let $y'$ be the one vertex in $N_2(y) - N_2'(y)$.  Since $u$ has $d-2$ neighbors outside of $R$, excluding $y$, and $y$ has $d-1$ neighbors excluding $u$, $y$ has a neighbor $z$ that is not adjacent to $u$.  Then $N(z) \subset N(y) \cup \{y,y'\} - \{z,u\}$, and since $\deg(z) = d$, $N(z) = N(y) \cup \{y,y'\} - \{z,u\}$.  The only vertices in the component of $G-\{u\}$ that contains $y$ are $N(y) - \{u\} \cup \{y,y'\}$, and thus $N_2(z) = \{u\}$, which implies that $R'$ is an A or B region.  This case has already been addressed.

\subsubsection{$\deg_2(y) = 2$ and $|N_2'(y)| = 2$}

Let $N_2'(y) = \{x,z\}$.  We consider two cases: if $z$ and $u$ are neighbors, and if they are not neighbors.

If $z$ and $u$ are neighbors, then $u$ has a set $Y$ of $d-3$ neighbors outside of $\{x,y,z\}$, and $y$ has $d-1$ neighbors, excluding $u$.  Let $w$ and $w'$ be distinct vertices in $N(y) - N(u) - \{u\}$.  Every vertex in $Y$ is within distance $2$ of $y$, and since $N_2(y) \cap Y = \emptyset$, $Y \subset N(y)$.  Thus $Y \subset R'$.  Also, $w$ and $w'$ are adjacent to each vertex in $Y \cup \{y,z\}$ as well as each other, since neither are adjacent to $u$, $x$, or any vertex of distance $3$ or more from $y$.  Similarly, every vertex in $Y$ can only have neighbors among $Y \cup \{y, w, w', z, u\}$, a set of size $d+2$, and so $y' \in Y$ is adjacent to all but possibly one other vertex in $Y$.  The complement of $G[Y]$ has no edges except a (possibly empty) matching, and let $Y'$ be the set of such vertices that are in such a matching.  Every vertex in $Y'$ is adjacent to $z$.  If $y' \in Y-Y'$, then $y'$ is not adjacent to $z$ since $y'$ has been established to be adjacent to every other vertex in $Y \cup \{y, w, w', z, u\}$ besides itself.  We conclude that $R \cup R'$ is an A tail, a contradiction to the assumption that $R$ is a superregion.

Now, if $u$ and $z$ are not neighbors, let $Y := N(y) \cap N(u)$.  Since $\{x,y\} \subset N(u) - N(y)$, $|Y| \leq d-2$.  Since $N(u) - N(y) -\{y\} \subset N_2(y)$, in fact $N(u) - N(y) = \{x,y\}$ and $|Y| = d-2$.  Choose $u'$ so that $N_2(y) = Y \cup \{u,u'\}$.  By $\deg_2(y) = 2$, $u'$ can have no neighbors outside of $Y \cup \{y,z\}$, and so $N(u') = Y \cup \{y,z\}$.  For $y' \in Y$, the only possible neighbors of $y'$ are in $Y \cup \{u, y, u', z\}$, since $y'$ is not adjacent to $x$ or any vertex of distance $3$ or more from $y$.  Of the vertices of $Y \cup \{u, y, u', z\} - \{y'\}$, $y'$ is adjacent to all but $1$.  Let $Y' := Y \cap N(z)$.  Each $y' \in Y'$ is adjacent to $u,u',y,z$ and thus all but $1$ other vertex of $Y$.  If $y' \in Y - Y'$, then $y$ is adjacent to every vertex of $Y - \{y'\}$, and so the complement of $G[Y]$ is a matching on $Y'$.  If $Y' \neq \emptyset$, then $R \cup R'$ is a B tail, a contradiction to the assumption that $R$ is a superregion, and if $Y = \emptyset$, then $R \cup R'$ is a tail, also a contradiction.

\subsubsection{$\deg_2(y) = 3$ and $|N_2'(y)| = 1$}

Let $G_y$ be the component of $G-x$ that contains $y$.  Then $G_y$ has $d+3$ vertices, namely $y$, all neighbors of $y$, and $N_2(y) - \{x\}$.  Each vertex in $G_y$ has degree $d$, except that $u$ has degree $d-1$.  This is impossible since the degree sum would be odd.

\subsubsection{$\deg_2(y) = 3$ and $|N_2'(y)| = 2$}

Let $N_2'(y) = \{x,z\}$.  By Lemma \ref{lemma31}, in $S_{R'3}$, we may replace $(x,y)$ with $(z,y)$, unless $(z,x) \in S_{R''4}$ for some superregion $R''$.  In this case, since $z \in \mathcal{W}$, and $R''$ has no vertex besides $z$ in $N_2(y)$, $R''$ is a tail.  Let $u'$ be the unique that is adjacent to both $y$ and $z$.  Assume that $u \neq u'$, since otherwise $R$ is a multitail.  Let $G_y$ be the component of $G-x-z$ that contains $y$.  Then $G_y$ has $d+2$ vertices, namely $y$, $N(y)$, and the one vertex of $N_2(y) - N_2'(y)$.  In $G_y$, all vertices have degree $d$ except for $u$ and $u'$, which each have degree $d-1$.  This requires $d$ to be even, so that sum of the degrees of all vertices in $G_y$ is even.  However, the existence of a tail $R$ requires $d$ to be odd.  We conclude that $(z,x) \not \in S_{R''4}$ as desired.

\subsubsection{$\deg_2(y) = 3$ and $|N_2'(y)| = 3$}

Let $N_2(y) = N_2'(y) = \{x,z,z'\}$.  By Lemma \ref{lemma31}, if $z \not \in \mathcal{W}$, then in $S_{R'3}$, we may replace $(x,y)$ with $(z,y)$.  Likewise, if $z' \not \in \mathcal{W}$, then in $S_{R'3}$, we may replace $(x,y)$ with $(z',y)$.  Now suppose that both $z$ and $z'$ are in $\mathcal{W}$.  If $z$ and $z'$ are in the same region $R''$, then $R''$ is either an A region or a multitail.  In that case, let $u_1$ be the unique vertex adjacent to each of $y, z, z'$.  Then in $G[y, N(y)]$, all vertices have degree $d$ except for $u$ and $u'$.  If $u = u'$, then $\deg(u) = d-3$ in $G[y, N(y)]$, while if $u \neq u'$, then $u$ and $u'$ have degree $d-1$ and $d-2$ respectively in $G[y, N(y)]$.  Both of these cases are impossible since the degree sum would be odd.

Now suppose that $x, z$, and $z'$ are all in different regions.  Since the regions containing $x, z, z'$ respectively each have exactly one vertex in $N_2(y)$, they must all be tails.  Let $u, u_1, u_2$ be the vertices adjacent to $y$ and respectively $x,z,z'$.  Since each of $x,z,z'$ are contained in tails and not multitails, $u,u_1,u_2$ are distinct.  Then $G_y$, the induced subgraph consisting of $y$ and its neighbors, has $d+1$ vertices, and all vertices have degree $d$ except for $u, u_1, u_2$, which each have degree $d-1$.  This is also impossible, since the sum of the degrees would be odd.

This enumerates all cases.

\end{document}